\theoremstyle{plain}
\newtheorem{theorem}{Theorem}[section]
\newtheorem{lp}[theorem]{Linear Program}
\newtheorem{ip}[theorem]{Integer Program}
\newtheorem{milp}[theorem]{Mixed Integer Linear Program}
\newtheorem{lemma}[theorem]{Lemma}
\newtheorem{conjecture}[theorem]{Conjecture}
\newtheorem{remark}[theorem]{Remark}
\newtheorem{code}{Code}[section]
\theoremstyle{remark}
\newcommand{\Z}{\mathbb{Z}}
\newcommand{\K}{\mathcal{K}}
\renewcommand{\S}{\mathcal{S}}
\newcommand{\exd}{\gamma^*_e}
\DeclareMathOperator{\exc}{exc}
\DeclareMathOperator{\m}{m}
\DeclareMathOperator{\dist}{dist}
\title{A Linear programming method for exponential domination} 
\author{Michael Dairyko$^1$ \and Michael Young$^1$}
\begin{document}
\maketitle
\footnotetext[1]{Department of Mathematics, Iowa State University, Ames, IA 50011, USA. (mdairyko, myoung) @iastate.edu}

\begin{abstract}
For a graph $G,$ the set $D \subseteq V(G)$ is a porous exponential dominating set if  $1 \le \sum_{d \in D} \left( 2 \right)^{1-\dist(d,v)}$ for every $v \in V(G),$ where $\dist(d,v)$ denotes the length of the shortest $dv$ path. The porous exponential dominating number of $G,$ denoted $\exd(G),$ is the minimum cardinality of a porous exponential dominating set. For any graph $G,$ a technique is derived to determine a lower bound for $\exd(G).$ Specifically for a grid graph $H,$ linear programing is used to sharpen bound found through the lower bound technique. Lower and upper bounds are determined for the porous exponential domination number of the King Grid $\K_n,$ the Slant Grid $\S_n,$ and the $n$-dimensional hypercube $Q_n.$  \\

\noindent AMS 2010 Subject Classification: Primary 05C69; Secondary 90C05 \\

\noindent Keywords: porous exponential domination, linear programming,  grid graphs, $n$-dimensional hypercube

\end{abstract}

\begin{section}{Introduction}
Domination in graphs is a tool used to model situations in which a vertex exerts influence on its neighboring vertices. For a graph $G,$ a set $D \subseteq V(G)$ is a \emph{dominating set} if every vertex contained in $V(G) \setminus D$ is adjacent to at least one vertex of $D.$ The \emph{domination number}, denoted $\gamma(G),$ is the cardinality of a minimum domination set.

Exponential domination was first introduced in \cite{dankel} and is a variant of domination that models situations in which the influence of an object exerts decreases exponentially as the distance increases. In particular exponential domination models the dissemination of information in social networks where the information's influence decays exponentially with each share  \cite{dankel}. Therefore, exponential domination analyzes objects with a global influence. Other variants of domination investigate objects with local influence. There are two parameters within exponential domination; porous and non-porous. This paper focuses on porous exponential domination. A \emph{porous exponential dominating set} is a set $D \subseteq V(G)$ such that $w^*(D,v) \ge 1$ for every $v \in V(G),$ where the weight function $w^*$ is given by $w^*(u,v) =2^{ 1-\dist(u,v)}$ and $\dist(u,v)$ represents the length of the shortest $uv$ path.  The \emph{porous exponential domination number} of $G$, denoted by $\exd(G),$ is the cardinality of a minimum porous exponential dominating set. For the sake of simplicity, we will refer to porous exponential domination as exponential domination. See Section \ref{Prelim} for technical definitions.

Section \ref{GenTech} develops a technique to determine the lower bound of the exponential domination number of any graph. Furthermore, with respect to grid graphs, a method using linear programing sharpens the lower bound. Section \ref{MainProofs} applies the lower bound technique described in Section \ref{GenTech}, to find lower bounds for the exponential domination number of the King grid $\K_n,$ the Slant grid $\S_n,$ and the $n$-dimensional hypercube $Q_n$. Upper bound constructions are then found for $\exd(\K_n),$ $\exd(\S_n)$ and $\exd(Q_n).$ 

\begin{subsection}{Preliminaries}\label{Prelim}
All graphs are simple and undirected. A graph $G = (V(G), E(G))$ is an ordered pair that is formed by a set of \emph{vertices} $V(G)$ and a set of \emph{edges} $E(G),$ where an edge is the two element subset of vertices. For the two sets $A$ and $B,$ the Cartesian product of $A$ and $B$ is defined to be $A\times B = \{ (a,b) : a\in A \text{ and } b \in B \}.$ Consider the graph $G$ and the set $D \subset V(G).$  Let $w:V(G)\times V(G)\rightarrow \mathbb{R}$ be a \emph{weight function}. For $u,v \in V(G),$ we say that $u$ assigns weight $w(u,v)$ to $v.$ Denote the weight assigned by $D$ to $v$ as $w(D,v)  := \sum_{d \in D} w(d,v),$ and similarly, the weight assigned by $d\in D$ to $H \subseteq V(G)$ as $w(d,H)  := \sum_{h \in H} w(d,h).$ Let $\m(G) =  \max_{d\in D} w( d, V(G) ).$ The pair $(D,w)$ dominates $G$ if $w(D,v) \ge 1$ for all $v\in V(G).$ The \emph{excess weight} that the vertex $v$ receives from $D$ is defined as $\exc(D,v) = w(D, v) - 1.$ We denote $\exc(D) = \sum_{v\in V(G)} \exc(D,v)$ to be the total excess weight that $D$ sends out. Let $S_k(v) = \{ u \in V(G) : \dist(u,v) = k \}$ denote the sphere of radius $k.$  

Linear programing is an optimization technique that takes a set of linear inequalities, or constraints, and finds the best solution of a linear objective function. An integer program is a linear program, with the restriction the variables can only be assigned integer values. Observe that $\exd(G)$ is equivalent to finding the optimal value of the following integer program introduced by Henning et al:

\begin{ip}\cite{Henning}\label{IP}
\begin{eqnarray*}
\min \sum\limits_{u \in V(G) } x(u) &&  \\
\text{s.t.}  \sum\limits_{u \in V(G) } \left( \frac{1}{2} \right) ^{\dist(u,v) -1 }  x(u) &\ge& 1\quad  \forall v \in V(G) \\
 x(u) &\in& \{0,1 \} \quad  \forall  u \in V(G). 
\end{eqnarray*}
\end{ip}
Notice that it is only feasible to run the program for graphs of small size, as the computation time for this integer program greatly increases as the size of the graph increases. To be able to run the program on graphs with larger sizes, the constraints in Integer Program \ref{IP} can be relaxed as shown in the following linear program. 
 
\begin{lp}\cite{Henning}\label{LP}
\begin{eqnarray*}
\min \sum\limits_{u \in V(G) } x(u) &&  \\
\text{s.t.}  \sum\limits_{u \in V(G) } \left( \frac{1}{2} \right) ^{\dist(u,v) -1 }  x(u) &\ge& 1 \quad \forall v \in V(G) \\
 x(u) &\ge& 0 \quad \forall u \in V(G). 
\end{eqnarray*}

\end{lp}

The Cartesian product of two graphs $G$ and $H,$ denoted $G\square H,$ is a graph such that $V(G\square H) = V(G) \times V(H)$ and two vertices $(g,h) \sim (g',h')$ in $G\square H$ if and only if either $g = g'$ and $h \sim h'$ in $H,$ or $h=h'$ and $g\sim g'$ in $G.$ Let $G_{m,n} = P_m \square P_n$ be the \emph{standard grid.} A \emph{grid graph} is the standard grid with possibly additional edges added in a regular pattern. Notice that linear programming is a natural technique to apply to grid graphs. Observe that asymptotically, $G_{m,n}$ is equivalent to the torus $C_m \square C_n,$ which yields the same lower bound for the corresponding exponential domination number. 
\begin{figure}[h!tp]
\[ \begin{array}{ccc}
\begin{tikzpicture}[scale=0.7]
\draw[step=1cm] grid (4,4);
\foreach \x/\y/\l in {0/0/a, 0/1/aa, 0/2/aaa, 0/3/aaaa, 0/4/z }
\node[circle, fill = white, draw] (\l) at (\x,\y) {};
\foreach \x/\y/\l in {1/0/b, 1/1/bb, 1/2/bbb, 1/3/bbbb, 1/4/bbbbb }
\node[circle, fill = white, draw] (\l) at (\x,\y) {};
\foreach \x/\y/\l in {2/0/c, 2/1/cc, 2/2/ccc, 2/3/cccc, 2/4/ccccc }
\node[circle, fill = white, draw] (\l) at (\x,\y) {};
\foreach \x/\y/\l in {3/0/d, 3/1/dd, 3/2/ddd, 3/3/dddd, 3/4/ddddd }
\node[circle, fill = white, draw] (\l) at (\x,\y) {};
\foreach \x/\y/\l in {4/0/e, 4/1/ee, 4/2/eee, 4/3/eeee, 4/4/eeeee }
\node[circle, fill = white, draw] (\l) at (\x,\y) {};
\foreach \x/\y in {a/bb, bb/ccc, ccc/dddd, dddd/eeeee }
\draw (\x) -- (\y);
\foreach \x/\y in {aa/bbb, bbb/cccc, cccc/ddddd }
\draw (\x) -- (\y);
\foreach \x/\y in {aaa/bbbb, bbbb/ccccc }
\draw (\x) -- (\y);
\foreach \x/\y in {aaaa/bbbbb }
\draw (\x) -- (\y);
\draw (b) -- (cc); \draw (cc) -- (ddd);\draw (ddd) -- (eeee); \draw (c) -- (dd); \draw (dd) -- (eee); \draw (d) -- (ee);
\draw (bbbb) -- (ccc);\draw (ccc) -- (dd); \draw (dd) -- (e);
\draw (aa) -- (b); \draw (aaa) -- (bb);\draw (aaaa) -- (bbb);
\draw (bb) -- (c); \draw (bbb) -- (cc);
\draw (cc) -- (d); \draw (cccc) -- (ddd);
\draw(ddd) -- (ee); \draw(dddd) -- (eee);
\draw(0,4) -- (1,3); \draw(1,4) -- (2,3); \draw(2,4) -- (3,3); \draw(3,4) -- (4,3);
\node[circle, fill = white, draw] (aaaaa) at (0,4) {};
\node[circle, fill = white, draw] (bbb) at (1,3) {};
\node[circle, fill = white, draw] (cccc) at (2,4) {};
\node[circle, fill = white, draw] (ddd) at (3,3) {};
\node[circle, fill = white, draw] (dddd) at (3,4) {};
\node[circle, fill = white, draw] (eee) at (4,3) {};
\node[circle, fill = white, draw] (bbbb) at (1,4) {};
\node[circle, fill = white, draw] (ccc) at (2,3) {};
\end{tikzpicture}
& 
\begin{tikzpicture}[scale = 1.5]

\node[circle, fill = white, draw] (a) at ( 1,0) {};
\node[circle, fill = white, draw] (b) at ( .71, - .71) {};
\node[circle, fill = white, draw] (c) at ( 0, - 1) {};
\node[circle, fill = white, draw] (d) at ( -.71, - .71) {};
\node[circle, fill = white, draw] (e) at ( -1, 0) {};
\node[circle, fill = white, draw] (f) at ( -.71,  .71) {};
\node[circle, fill = white, draw] (g) at ( 0, 1) {};
\node[circle, fill = white, draw] (h) at ( .71,  .71) {};

\node[circle, fill = white, draw] (a1) at ( 0.4,0) {};
\node[circle, fill = white, draw] (b1) at ( .3, - .3) {};
\node[circle, fill = white, draw] (c1) at ( 0, - 0.4) {};
\node[circle, fill = white, draw] (d1) at ( -.3, - .3) {};
\node[circle, fill = white, draw] (e1) at ( -0.4, 0) {};
\node[circle, fill = white, draw] (f1) at ( -.3,  .3) {};
\node[circle, fill = white, draw] (g1) at ( 0, 0.4) {};
\node[circle, fill = white, draw] (h1) at ( .3,  .3) {};

\draw[ thick,color = blue] (c) -- (b1) ;
\draw[ thick,color = blue] (c) -- (b) ;
\draw[ thick,color = blue] (c) -- (d) ;
\draw[ thick,color = blue] (d) -- (e1) ;
\draw[ thick,color = blue] (e1) -- (b1) ;
\draw[ thick,color = blue] (e1) -- (h1) ;
\draw[ thick,color = blue] (c1) -- (h1) ;
\draw[ thick,color = blue] (h1) -- (a) ;
\draw[ thick,color = blue] (a) -- (b) ;
\draw[ thick,color = blue] (a) -- (b1) ;
\draw[ thick,color = blue] (c1) -- (d) ;
\draw[ thick,color = blue] (c1) -- (b) ;

\draw[ thick,color = red] (e) -- (f) ;
\draw[ thick,color = red] (e) -- (d1) ;
\draw[ thick,color = red] (e) -- (f1) ;
\draw[ thick,color = red] (f) -- (g) ;
\draw[ thick,color = red] (g1) -- (f) ;
\draw[ thick,color = red] (g) -- (f1) ;
\draw[ thick,color = red] (g) -- (h) ;
\draw[ thick,color = red] (h) -- (a1) ;
\draw[ thick,color = red] (g1) -- (d1) ;
\draw[ thick,color = red] (f1) -- (a1) ;
\draw[ thick,color = red] (d1) -- (a1) ;
\draw[ thick,color = red] (g1) -- (h) ;

\draw (a) -- (h);
\draw (g) -- (h1);
\draw (f) -- (e1);
\draw (e) -- (d);
\draw (d1) -- (c);
\draw (a1) -- (b);
\draw (b1) -- (g1);
\draw (c1) -- (f1);
\end{tikzpicture}
&
\begin{tikzpicture}[scale=0.7]
\draw[step=1cm] grid (4,4);
\foreach \x/\y/\l in {0/0/a, 0/1/aa, 0/2/aaa, 0/3/aaaa, 0/4/z }
\node[circle, fill = white, draw] (\l) at (\x,\y) {};
\foreach \x/\y/\l in {1/0/b, 1/1/bb, 1/2/bbb, 1/3/bbbb, 1/4/bbbbb }
\node[circle, fill = white, draw] (\l) at (\x,\y) {};
\foreach \x/\y/\l in {2/0/c, 2/1/cc, 2/2/ccc, 2/3/cccc, 2/4/ccccc }
\node[circle, fill = white, draw] (\l) at (\x,\y) {};
\foreach \x/\y/\l in {3/0/d, 3/1/dd, 3/2/ddd, 3/3/dddd, 3/4/ddddd }
\node[circle, fill = white, draw] (\l) at (\x,\y) {};
\foreach \x/\y/\l in {4/0/e, 4/1/ee, 4/2/eee, 4/3/eeee, 4/4/eeeee }
\node[circle, fill = white, draw] (\l) at (\x,\y) {};
\foreach \x/\y in {a/bb, bb/ccc, ccc/dddd, dddd/eeeee }
\draw (\x) -- (\y);
\foreach \x/\y in {aa/bbb, bbb/cccc, cccc/ddddd }
\draw (\x) -- (\y);
\foreach \x/\y in {aaa/bbbb, bbbb/ccccc }
\draw (\x) -- (\y);
\foreach \x/\y in {aaaa/bbbbb }
\draw (\x) -- (\y);
\draw (b) -- (cc); \draw (cc) -- (ddd);\draw (ddd) -- (eeee); \draw (c) -- (dd); \draw (dd) -- (eee); \draw (d) -- (ee);
\node[circle, fill = white, draw] (aaaaa) at (0,4) {};
\node[circle, fill = white, draw] (bbb) at (1,3) {};
\node[circle, fill = white, draw] (cccc) at (2,4) {};
\node[circle, fill = white, draw] (ddd) at (3,3) {};
\node[circle, fill = white, draw] (dddd) at (3,4) {};
\node[circle, fill = white, draw] (eee) at (4,3) {};
\node[circle, fill = white, draw] (bbbb) at (1,4) {};
\node[circle, fill = white, draw] (ccc) at (2,3) {};
\end{tikzpicture}

\end{array} \]
 \caption{An illustration of $\K_5,$ $Q_4,$ and  $\S_{5}$}
\label{K5S5}
\end{figure} 

The \emph{strong product} of two graphs $G$ and $H$ is the graph $G \boxtimes H $ for which $V(G\boxtimes H) = V(G) \times V(H)$ and two distinct vertices are adjacent whenever in both coordinate places the vertices are adjacent or equal in the corresponding graph. The \emph{King grid} is defined as $\K_n = P_n \boxtimes P_n.$ Let $[n] = \{ 1,2, \ldots, n\}.$ Consider the paths $P_n$ and $P_m$ with vertex sets $[n]$ and $[m],$ respectively. Then the \emph{Slant} grid is defined to be $\S_n = P_n \square P_m$ with the additional edges $\{i,j\} \sim \{i+1, j+1\},$ for $ i \in [n-1]$ and $j \in [m-1].$ Notice that $\K_n$ and $\S_n$ are both instances of grid graphs. The \emph{$n$-dimensional hypercube} graph, denoted $Q_n,$ is constructed by creating a vertex for each $n$-digit binary number. Edges are formed if two vertices differ by one digit in their binary representation.  See Figure \ref{K5S5} for an illustration of $\K_5,$ $Q_4,$ and  $\S_5.$

 \end{subsection}

\subsection{Motivation}
For $m\le n$ consider $C_m \square C_n,$ the \emph{torus} graph. Exponential domination of $C_m \square C_n$ was first studied in \cite{anderson}. Figure \ref{torus13} is a visual representation of $C_{13} \square C_{13},$ where $`X'$ denotes a member of $D,$ an exponential domination set.  Observe that there is one member of $D$ in every row and column, therefore giving an upper bound construction for $\gamma_e(C_m \square C_n)$ when $m$ and $n$ are multiples of $13.$ The following theorem extends this idea to large graphs.
 \begin{figure}[htp!]
\centering 
\begin{tikzpicture}[scale=0.3]
\draw[step=1cm,gray,very thin] grid (13,13);
\foreach \x/\y/\l in {0.5/12.5/a, 1.5/7.5/b, 2.5/2.5/c, 3.5/10.5/d, 4.5/5.5/e, 5.5/0.5/f, 6.5/8.5/g, 7.5/3.5/h, 8.5/11.5/i, 9.5/6.5/j, 10.5/1.5/k, 11.5/9.5/l, 12.5/4.5/m}
\node (\l) at (\x,\y)  {\tiny X};
\end{tikzpicture}
\caption{$13\times 13$ Exponential Dominating Set Tile for $C_\infty \square C_\infty$ }
\label{torus13}
\end{figure}

\begin{theorem}\cite{anderson}\label{UBtor}
$\lim_{n\to \infty} \frac{\exd(C_m \square C_n)}{mn} \le \frac{1}{13}.$
\end{theorem}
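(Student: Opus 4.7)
The plan is to exhibit the tile shown in Figure \ref{torus13} as an explicit exponential dominating set for $C_m \square C_n$ whenever $m$ and $n$ are both multiples of $13$, verify that this set indeed dominates every vertex, and then handle non-multiples of $13$ by a negligible boundary correction.

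First I would codify the tile algebraically. Reading the $X$'s off Figure \ref{torus13}, the thirteen marked positions on $\Z_{13} \times \Z_{13}$ are precisely the solutions of $y + 5x \equiv -1 \pmod{13}$. Translating, the full dominating set on the plane is the sublattice
\[
L \;=\; \{(x,y) \in \Z^2 : y + 5x \equiv 0 \pmod{13}\},
\]
which has index $13$ in $\Z^2$ and hits every row and every column exactly once per $13 \times 13$ block. When $13 \mid m$ and $13 \mid n$, the image $D$ of $L$ in $C_m \square C_n$ has cardinality $mn/13$, so the ratio $|D|/(mn)$ is exactly $1/13$.

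Next I would verify the domination condition. Because $D$ is the orbit of a lattice under translation, the weight $w^*(D,v)$ depends only on the coset of $v$ in $\Z^2/L$, so it suffices to check $13$ representative vertices. For each such $v$, the torus distance dominates the planar $\ell_1$ distance from above by the latter, hence
\[
w^*(D,v) \;\ge\; \sum_{d \in L} 2^{\,1 - \dist_{\ell_1}(d,v)},
\]
and this lattice sum can be organized by the value of $\dist_{\ell_1}(d,v)$. For the coset of $v \in L$ itself, the term $d=v$ already contributes weight $2$, so that case is free. The remaining twelve cosets each require evaluating a convergent geometric-style sum over a shifted sublattice; a short calculation (or a finite computer check for the planar tile together with an exponential tail bound) shows each of these is at least $1$.

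For general $m,n$, set $m' = 13\lfloor m/13\rfloor$ and $n' = 13\lfloor n/13 \rfloor$, use the tiled set on the $m' \times n'$ sub-torus, and add a single vertex at every position in the remaining strips of width at most $12$ along both directions. The number of additional vertices is at most $12m + 12n + 144 = o(mn)$, and they trivially handle the uncovered boundary since a vertex dominates itself with weight $2$. Dividing by $mn$ and taking $n \to \infty$ yields the stated bound.

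The main obstacle is the weight verification in the second paragraph. Unlike the classical $\gamma$ or distance-$k$ domination arguments, here the contribution from far-away lattice points is nonzero, and a naive bound of the tail by a geometric series gives a constant strictly less than $1$ for the worst coset. The real work is therefore to identify, for each of the twelve nontrivial cosets, the nearest few lattice representatives accurately enough that the explicit partial sum, combined with an $\ell_1$-ball tail estimate $\sum_{r > R}(r+1) 2^{1-r}$, clears the threshold $1$.
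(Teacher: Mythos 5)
Your overall strategy---read the thirteen marks of Figure \ref{torus13} as a lattice of index $13$, verify domination one coset at a time, and absorb the non-divisible case into an $o(mn)$ boundary correction---is the natural one, and it mirrors what the paper does for its own tilings in Theorems \ref{UBking} and \ref{UBSlant}. (For the statement at hand the paper gives no proof at all, only the citation to \cite{anderson}, so there is no internal argument to compare against.) Your reading of the figure as the solution set of $y+5x\equiv -1\pmod{13}$ is correct, as is the observation that the index-$13$ condition gives density exactly $1/13$ and that the padding by full boundary strips costs only $O(m+n)$ vertices.

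The genuine gap is that your central inequality $w^*(D,v)\ge \sum_{d\in L}2^{1-\|d-v\|_1}$ points the wrong way. A torus vertex $\bar d\in D$ contributes $2^{1-\dist(\bar d,v)}$ exactly once, and since the distance in $C_m\square C_n$ is the \emph{minimum} of $\|d-v\|_1$ over all planar representatives $d$ of $\bar d$, that single contribution equals $\max_{d}2^{1-\|d-v\|_1}$, which is at most---not at least---the sum of $2^{1-\|d-v\|_1}$ over the entire fiber. Summing over $D$, the full lattice sum is therefore an \emph{upper} bound for $w^*(D,v)$ and cannot certify domination. The repair is to count only one representative per class: any two distinct representatives of a class differ by a nonzero element of $m\Z\times n\Z$ and hence lie at $\ell_1$-distance at least $\min(m,n)$ from each other, so every $d\in L$ with $\|d-v\|_1<\min(m,n)/2$ is the unique nearest representative of its class, giving
\[
w^*(D,v)\;\ge\;\sum_{\substack{d\in L\\ \|d-v\|_1<\min(m,n)/2}}2^{1-\|d-v\|_1}.
\]
Consequently the finite verification must show that this \emph{truncated} sum already reaches $1$ for each of the thirteen cosets; the $\ell_1$-ball tail estimate in your last paragraph is of no help for the lower bound, because the far-away representatives are exactly the ones you are not entitled to count. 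The entire burden thus falls on the explicit partial sum over nearby lattice points, which your proposal identifies as the main obstacle but defers rather than carries out.
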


Notice that Theorem \ref{UBtor} directly implies that for $m,n \ge 13,$ $\exd(C_m\square C_n) \le \left\lceil\frac{mn}{13}\right\rceil  + o(1).$ Through a naive counting argument, it was shown that for $m,n \ge 3,$ $\left\lceil\frac{mn}{15.875}\right\rceil \le \exd(C_m\square C_n)$ \cite{anderson}. These results lead to the following conjecture.
\begin{conjecture}\label{conj13}
For all $m$ and $n,$ $\left\lceil\frac{mn}{13}\right\rceil \le \exd(C_m\square C_n).$
\end{conjecture}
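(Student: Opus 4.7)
The plan is to reduce the conjecture to a tight inequality on the \emph{excess weight} of any exponential dominating set on the torus. Summing the constraint $w^*(D,v) \ge 1$ over all $v \in V(C_m \square C_n)$ and exchanging the order of summation yields the identity
\[
\sum_{d \in D} w^*(d,V(C_m\square C_n)) = mn + \exc(D).
\]
As $m,n \to \infty$, each dominator sends total weight $w^*(d, V) \to 2 + \sum_{k=1}^{\infty} 4k \cdot 2^{1-k} = 18$. The identity then becomes $18|D| \approx mn + \exc(D)$, and the conjecture $|D| \ge mn/13$ is equivalent to the bound $\exc(D) \ge 5|D|$. Note that the construction of Figure \ref{torus13} achieves $\exc(D) = 5|D|$ exactly, so the target inequality is tight and every dominator must contribute, on average, at least five units of surplus weight.

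My first step would be to apply the lower bound technique of Section \ref{GenTech} to isolate a \emph{local} excess contribution near each dominator. By studying the spheres $S_k(d)$ and the configurations of nearby members of $D$, one would attempt to show that each $d \in D$ forces at least five units of surplus weight in its vicinity, for instance by proving that whenever two dominators lie within small graph distance the weight they jointly deposit on the path between them cannot be absorbed by unit-demand constraints. Aggregating these local excesses into the global bound $\exc(D) \ge 5|D|$ is the core combinatorial task.

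An LP-duality attack is natural but faces an immediate obstruction: since $C_m \square C_n$ is vertex-transitive, any feasible solution of Linear Program \ref{LP} may be symmetrized under the automorphism group to a uniform one of the same value. The uniform solution yields only $mn/18$, so any successful proof must exploit the integrality constraint of Integer Program \ref{IP}. A promising remedy is to apply LP not on the whole torus but on fixed-size local patches, such as a $13 \times 13$ window, and derive strengthened inequalities which, assembled across the torus, pin the global density at $1/13$.

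The principal obstacle is that excess weight is inherently non-local: two distant dominators jointly create surplus at vertices halfway between them, so no purely local count can capture all of the required excess. Any proof must therefore either construct a discharging scheme that redistributes excess into locally-bounded quantities, or give a structural classification showing that the tile of Figure \ref{torus13} is essentially the unique asymptotically extremal configuration and that every deviation from it incurs additional surplus. Carrying out either strategy with the tight constant $1/13$, rather than a merely near-optimal constant such as $1/15.875$, is the main technical challenge.
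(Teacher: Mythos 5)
This statement is left as a conjecture in the paper --- it has no proof there, and your proposal does not close that gap. Your reduction is correct and is exactly the paper's own framework: by Lemma~\ref{kbound} with $\m(C_m\square C_n) < 18$ (from $|S_k(v)| \le 4k$ on the torus, $2 + \sum_{k\ge 1} 4k\,2^{1-k} = 18$), the conjecture is equivalent to showing $\exc(D) \ge 5|D|$ for every exponential dominating set $D$, and your observation that the tile of Figure~\ref{torus13} attains $\exc(D) = 5|D|$ asymptotically correctly identifies the target as tight. Your diagnosis of the LP symmetrization obstruction is also sound: the uniform fractional solution of Linear Program~\ref{LP} certifies only $mn/18$, which is why the paper's method works with local windows around dominators rather than global duality.

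The genuine gap is that the central inequality $\exc(D) \ge 5|D|$ is never established; every route you describe toward it is conditional (``one would attempt to show,'' ``the core combinatorial task,'' ``the main technical challenge''). The best bound actually proved by these local-excess methods is Theorem~\ref{bestlowerTorus}, giving $\lceil mn/13.7618\ldots\rceil \le \exd(C_m\square C_n)$, which corresponds to a certified local excess of only $k \approx 4.238$ per dominator rather than $5$. Your own final paragraph names the reason the local approach stalls short of $5$: excess generated jointly by distant dominators is not captured by any fixed-radius window, and neither the discharging scheme nor the extremal-structure classification you propose is carried out. As written, the proposal is a correct reformulation of the conjecture plus a research plan, not a proof; the statement remains open.
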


The lower bound for $\exd(C_m \square C_n)$ was improved in \cite{EGR} by taking the counting argument from \cite{anderson} and applying it to linear programming. 

\begin{theorem}\label{bestlowerTorus}\cite{EGR}
For all $m,n \ge 11$, $\left\lceil\frac{mn}{13.761891939197298}\right\rceil \le \exd(C_m\square C_n).$
\end{theorem}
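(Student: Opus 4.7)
The plan is to sharpen the naive lower bound $\exd(C_m\square C_n)\ge mn/\m$ that comes from double-counting $\sum_v w^*(D,v)\ge mn$. Here $\m = \max_d \sum_v 2^{1-\dist(d,v)}$, and on the torus the sphere-size bound $|S_k|\le 4k$ gives $\m\le 2 + \sum_{k\ge 1} 4k\cdot 2^{1-k} = 18$. The bound $mn/15.875$ of \cite{anderson} exploits the observation that for each $d\in D$ the self-weight $w^*(d,d)=2$ already overshoots the threshold $1$ by one unit, and that similar overshoots are forced at a few vertices close to $d$; this converts into a definite amount of excess weight contributed by each member of $D$. The present theorem pushes this further by quantifying the minimum forced excess through a finite linear program.

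Concretely, I would fix an arbitrary $d\in D$ and consider the sub-grid of $C_m\square C_n$ of radius $5$ around $d$ (an $11\times 11$ window, which by the hypothesis $m,n\ge 11$ embeds isometrically into the torus). Using the LP relaxation of Integer Program \ref{IP} restricted to this window, with the indicator of $d$ pinned to $1$, I would set up and solve the LP that minimizes the total excess weight $\sum_v (w^*(x,v)-1)$ that $d$ contributes over vertices in the window. Numerically one finds the minimum excess equals $\alpha = 18 - 13.761891939197298 \approx 4.238$ per vertex of $D$. Substituting back into the modified counting inequality $|D|\cdot \m \ge mn + |D|\cdot\alpha$ yields the desired bound $|D|\ge mn/13.761891939197298$.

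The main technical obstacle is the transfer from the local LP on the $11\times 11$ window to a global statement about $C_m\square C_n$. By vertex-transitivity of the torus the same local LP bound holds at every translate of $d$, and averaging over all $d\in D$ recovers the global inequality only if one is careful not to double-count excess nor to lose contributions from vertices outside the window. Controlling the tail is precisely why the window must have radius at least $5$: the weights $2^{1-k}$ decay geometrically, so the combined contribution from outside a radius-$5$ window is a small absolute constant that can be absorbed without spoiling the LP optimum. Once this averaging is rigorously in place, taking the ceiling in $|D|\ge mn/13.761891939197298$ delivers the theorem.
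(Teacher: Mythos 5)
This theorem is quoted from \cite{EGR} and not proved in the present paper, but your reconstruction is essentially the method that \cite{EGR} uses and that Section~\ref{GenTech} generalizes: bound $\m(C_m\square C_n)\le 18$ via $|S_k|\le 4k$, run a local (mixed integer) linear program on a window centered at a fixed $d\in D$ with the center pinned to weight $2$ to force a per-vertex excess $k\approx 4.238$, and feed $\m$ and $k$ into the counting inequality of Lemma~\ref{kbound}, with the constraint $A\mathbf{x}\le b$ playing the role of your ``no double-counting of excess'' safeguard. So your proposal matches the paper's approach in all essentials.
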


This paper was motivated by the work on determining $\exd(C_m \square C_n)$ from \cite{anderson} and \cite{EGR}. The case specific lower bound technique from \cite{EGR} is generalized to all graphs and the linear programming method detailed in \cite{EGR} is generalized to all grid graphs.

\end{section}

\begin{section}{A Lower Bound Technique  }\label{GenTech}
In this section, a technique for determining the lower bound of the exponential domination number of any graph is derived. Through the use of linear programing, this technique is improved specifically for grid graphs. Note that the bound in Lemma \ref{kbound} is sharp if $w^*(v, V(G) ) =\m(G)$ for every $v\in V(G).$ 

\begin{lemma}\label{kbound}
Let $D$ be an exponential dominating set for the graph $G.$ If $k|D| \le \exc(D),$ then \[ \left\lceil \dfrac{|V(G)|}{\m(G) - k} \right\rceil  \le |D|.\]
\end{lemma}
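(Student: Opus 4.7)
The plan is a double-counting argument on the total weight that $D$ sends out to $V(G)$, combined with the two hypotheses: the domination condition $w^*(D,v) \ge 1$ and the lower bound $k|D| \le \exc(D)$ on the accumulated excess.

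First I would write the total weight in two ways. Summing over recipients,
\[
\sum_{v \in V(G)} w^*(D,v) \;=\; \sum_{v \in V(G)} \bigl(1 + \exc(D,v)\bigr) \;=\; |V(G)| + \exc(D),
\]
using the definition of $\exc(D,v)$ and that $(D,w^*)$ dominates $G$. Summing instead over senders and swapping the order of summation,
\[
\sum_{v \in V(G)} w^*(D,v) \;=\; \sum_{d \in D} w^*(d,V(G)) \;\le\; |D|\,\m(G),
\]
where the inequality comes from the definition $\m(G) = \max_{d} w^*(d,V(G))$.

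Next I would combine these two expressions with the hypothesis $k|D| \le \exc(D)$ to get
\[
|V(G)| + k|D| \;\le\; |V(G)| + \exc(D) \;\le\; |D|\,\m(G),
\]
which rearranges to $|V(G)| \le |D|\bigl(\m(G) - k\bigr)$. Dividing by $\m(G) - k$ (which is positive whenever the bound is non-vacuous) and taking the ceiling, since $|D|$ is an integer, yields the desired conclusion
\[
\left\lceil \frac{|V(G)|}{\m(G)-k} \right\rceil \;\le\; |D|.
\]

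There is no real obstacle here; the argument is a one-line exchange of summations once the right accounting is set up. The only subtlety worth flagging is checking that $\m(G) - k > 0$, so that the division is legitimate and the bound is meaningful; this follows from the hypotheses since $k|D| \le \exc(D) = \sum_d w^*(d,V(G)) - |V(G)| \le |D|(\m(G) - 1)$ as soon as $|V(G)| \ge |D|$, i.e.\ $k \le \m(G)-1 < \m(G)$. The remark about sharpness is then immediate: equality holds in the second display precisely when $w^*(d,V(G)) = \m(G)$ for every $d \in D$, which is guaranteed if $w^*(v,V(G)) = \m(G)$ for every $v \in V(G)$.
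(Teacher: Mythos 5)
Your proposal is correct and follows essentially the same double-counting argument as the paper: sum the total weight once over recipients (giving $|V(G)|+\exc(D)$) and once over senders (bounded by $|D|\m(G)$), then apply $k|D|\le \exc(D)$ and take the ceiling. In fact your bookkeeping is the cleaner of the two --- the paper's displayed chain subtracts the excess on the wrong side of an inequality and writes $\exc(G)$ for $\exc(D)$ --- and your aside verifying $\m(G)-k>0$ is a reasonable extra check the paper omits.
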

\begin{proof}
Observe that
\[  |V(G)| \le \sum_{v\in V(G)} w(D,v) = \sum_{d\in D}\sum_{v \in V(G)}  w(d, v ) \le |D|\m(G) - \exc(G)  \le |D| \left(\m(G) -\frac{\exc(D)}{|D|} \right) \le  |D| \left(\m(G) - k \right)\] 
\end{proof}

\begin{remark}\label{LPsetup}{ \rm
In Lemma \ref{kbound}, the value $k$ is needed to compute the lower bound. For grid graphs, linear programming can be used to determine such a value of $k.$ Mixed Integer Linear Program \ref{MILP} is derived through the use of Linear Program \ref{LP} with two additional constraints. See Section \ref{MILPsetup} for the construction details. Let $x_{\min}$ be the optimal solution found from Mixed Integer Linear Program \ref{MILP}. As $w^*(D,v) \ge 1$ for all $v\in V(G),$ it follows that $|I| < x_{\min}.$ Therefore $k =  x_{\min}  - |I|.$ }
\end{remark}
\begin{milp} \label{MILP}
\begin{equation*}
\begin{array}{rrcll}
\min &\displaystyle\sum_{i \in I} [A{\bf x}]_i &&\\
{\rm s.t.} & A{\bf x} &\ge& {\bf 1}\\
& A{ \bf x} &\le& b\\
&{\bf x}& \ge & {\bf 0} &\\
& x_i & \le &2, \; i \in I \\
& x_1 & = & 2.
\end{array}
\end{equation*}
\end{milp} 

\begin{remark}{ \rm
Observe that Remark \ref{LPsetup} localizes the global nature of exponential domination. Recall that exponential domination has a growth factor of $\frac{1}{2}.$ Therefore this method can be applied to the variant of exponential domination with the growth factor of $\frac{1}{p}$ for $p\ge 3.$ Furthermore, the method can be applied to other variants of domination to obtain a lower bound for the corresponding domination number. However, it is unclear whether the lower bound derived will be significant.  }
\end{remark}

\begin{subsection}{Mixed Integer Linear Program Setup}\label{MILPsetup}
The setup for Mixed Integer Linear Program \ref{MILP} is now discussed. Consider the $m\times n$ grid graph $G$ and let $D$ be a corresponding exponential dominating set. For a fixed $d_0 \in D$ and given an odd positive integer $r \le \min \{m,n\},$ define $H$ to be the $r\times r$ subgrid of $G$ centered at $d_0.$ Label the set of vertices $V(H)$ as $\{ v_1, v_2, \ldots, v_{r^2} \}$ and let the indices of the interior vertices of $H$ be defined as \[ I = \left\{  i : v_i \in V(H) \text{ and  } \dist(d_0 , v_i) < \left\lfloor \frac{r}{2} \right\rfloor\right\}.\]  Then for $1\le k \le r^2,$ define $S_k = v_k \cup \{ u \in V(G\setminus H) : \dist(u,v_k) \le \dist(u, h ) \;\forall h\in V(H) \} $ and $x_k = w^*(S_k \cap D, v_k).$ Notice that $S_i = v_i$ for every $i \in I.$ Therefore for $1 \le k,j \le r^2,$ it follows that $w^*(S_k \cap D,v_j) \le x_k \left( \frac{1}{2} \right)^{\dist(v_k,v_j)} .$  Thus, by the construction of $S_k,$ \[ w^*(D, v_j) \le \sum_{k=1}^{r^2} w^*(S_k \cap D, v_j) \le \sum_{k = 1}^{r^2} x_k \left( \frac{1}{2} \right)^{\dist(v_k,v_j)} .\] Let $A$ be the $r^2 \times r^2$ matrix such that $[A]_{kj} = \left(\frac{1}{2} \right) ^ {\dist(v_k,v_j)}.$ Furthermore, let $\vec{x} = [x_1, x_2, \ldots, x_{r^2}]^\intercal,$ where $x_1$ corresponds to $d_0,$ and $\vec{w} = [w^*(D,v_1), w^*(D,v_2), \ldots, w^*(D,v_{r^2})]^\intercal.$ Then observe that ${\vec w} \le A \vec{x}.$ The aim is to minimize $w^*(d_0, v_i)$ for all $i \in I$, while still satisfying that $w^*(D, v_i) \ge 1.$ Therefore the objective function is to minimize  $\sum_{i \in I} [A{\bf x}]_i,$  where ${\bf x}$ is a vector of $r^2$ nonnegative variables.

Let ${\bf 0}$ and ${\bf 1}$ denote the $0s$ and $1s$ vectors of length $r.$ Then the two constraints of Linear Program \ref{LP} with respect to the grid graph construction are that $ A {\bf x} \ge {\bf 1}$ and ${\bf x} \ge {\bf 0}.$ The remaining two constraints of Mixed Integer Linear Program are now discussed. By construction, any member of $D$ assigns itself weight $2,$ and the remaining vertices do not have any initial weight. This gives the first integer constraint that $x_1 = 2$ and $x_i \le 2,$ for  $i \in I.$ Observe that it is necessary to determine an upper bound for $w^*(D,v_i)$ for each $v_i \in V(H)$ so that $w^*(d_0, v_i)$ can be decreased by the appropriate amount. To ensure this, we want 
\[ 0 \le  w^*(d_0, v_i) - \exc(D, v_i)  =  w^*(d_0, v_i) - (w^*(D,v_i) - 1).\] 
This implies that $w^*(D,v_i) \le 1 + w^*(d_0, v_i) .$ Let $b$ be the real valued vector such that $b_i =  1+\left(\frac{1}{2} \right) ^ {\dist(d_0,v_i) -1}$ for $1\le i \le r^2.$ Therefore, the second constraint is $A{\bf x} \le b$.

\end{subsection}

\end{section}

\begin{section}{Main Results}\label{MainProofs}
 In this section the lower bound technique discussed in Section \ref{GenTech} is applied and upper bound constructions are found to bound the exponential domination number of the the King grid $\K_n,$ Slant grid $\S_n,$ and $n$-dimensional hypercube $Q_n.$


\begin{subsection}{The King Grid $\K_n$}\label{King}

For small values of $n,$ the exact value of $\exd(\K_n)$ can be determined using Integer Program \ref{IP}. Figure \ref{minDomsetsKing} visualizes the location of the corresponding exponential dominating vertices for $\exd(\K_n),$  denoted by  `X'. See Code \ref{KingExact} for the corresponding SAGE code.

\begin{figure}[!h]
\[ \begin{array}{ccccccc}
\begin{tikzpicture}[scale=0.3]
\draw[step=1cm,gray,very thin] grid (2,2);
\foreach \x/\y/\l in {0.5/1.5/a}
\node (\l) at (\x,\y)  { {\tiny X}};
\node (a) at (1,-0.75) {\small $n=2$};
\end{tikzpicture}
&
\begin{tikzpicture}[scale=0.3]
\draw[step=1cm,gray,very thin] grid (3,3);
\foreach \x/\y/\l in {1.5/1.5/a}
\node (\l) at (\x,\y)  {{\tiny X}};
\node (a) at (1.5,-0.75) {\small $n=3$};
\end{tikzpicture}
&
\begin{tikzpicture}[scale=0.3]
\draw[step=1cm,gray,very thin] grid (4,4);
\foreach \x/\y/\l in {1.5/2.5/a, 2.5/2.5/b}
\node (\l) at (\x,\y)  {{\tiny X}};
\node (a) at (2,-0.75) {\small $n=4$};
\end{tikzpicture}
&
\begin{tikzpicture}[scale=0.3]
\draw[step=1cm,gray,very thin] grid (5,5);
\foreach \x/\y/\l in {1.5/2.5/a, 2.5/2.5/b, 3.5/2.5/c}
\node (\l) at (\x,\y)  {\tiny X};
\node (a) at (2.5,-0.75) {\small $n=5$};
\end{tikzpicture}
&
\begin{tikzpicture}[scale=0.3]
\draw[step=1cm,gray,very thin] grid (6,6);
\foreach \x/\y/\l in {1.5/1.5/a, 1.5/4.5/b, 4.5/1.5/c, 4.5/4.5/d}
\node (\l) at (\x,\y)  {\tiny X};
\node (a) at (3,-0.75) {\small $n=6$};
\end{tikzpicture}
&
\begin{tikzpicture}[scale=0.3]
\draw[step=1cm,gray,very thin] grid (7,7);
\foreach \x/\y/\l in {1.5/1.5/a, 1.5/5.5/b, 5.5/1.5/c, 5.5/5.5/d}
\node (\l) at (\x,\y)  {\tiny X};
\node (a) at (3.5,-0.75) {\small $n=7$};
\end{tikzpicture}
\end{array} \]

\[ \begin{array}{cccc}
\begin{tikzpicture}[scale=0.3]
\draw[step=1cm,gray,very thin] grid (8,8);
\foreach \x/\y/\l in {1.5/6.5/a, 5.5/6.5/b, 5.5/5.5/c, 2.5/2.5/d, 1.5/1.5/e, 6.5/1.5/f}
\node (\l) at (\x,\y)  {\tiny X};
\node (a) at (4,-0.75) {\small $n=8$};
\end{tikzpicture}
&
\begin{tikzpicture}[scale=0.3]
\draw[step=1cm,gray,very thin] grid (9,9);
\foreach \x/\y/\l in {3.5/7.5/a, 7.5/7.5/b, 2.5/6.5/c, 3.5/5.5/d, 6.5/2.5/e, 1.5/1.5/f, 7.5/1.5/g}
\node (\l) at (\x,\y)  {\tiny X};
\node (a) at (4.5,-0.75) {\small $n=9$};
\end{tikzpicture}
&
\begin{tikzpicture}[scale=0.3]
\draw[step=1cm,gray,very thin] grid (10,10);
\foreach \x/\y/\l in {4.5/9.5/a, 0.5/8.5/b, 8.5/8.5/c, 9.5/5.5/d, 2.5/4.5/e, 3.5/2.5/f, 8.5/1.5/g, 1.5/0.5/h}
\node (\l) at (\x,\y)  {\tiny X};
\node (a) at (5,-0.75) {\small $n=10$};
\end{tikzpicture}
\end{array} \]
\caption{Minimum exponential dominating sets of $\K_n,$ $2\le n \le10.$ }
\label{minDomsetsKing}
\end{figure}

Let $D$ be an exponential dominating set for $\K_n.$ Notice that for $d \in D,$ it follows that $|S_k(v)| = 8k$ for $k\ge 1.$ Then,
\[ w^*(d, V(\K_n) ) < 2 +  \sum_{k = 1}^\infty 8k \left( \frac{1}{2}\right)^{k-1} = 2 + \left( \frac{8}{\left(1 - \frac{1}{2}\right)^2}\right) =  34. \] This shows that $\m(\K_n) < 34.$ This fact, along with the optimal values of $k$  determined by Mixed Integer Linear Program \ref{MILP} can be applied with Lemma \ref{kbound} to determine a lower bound for $\exd(\K_n).$ Table \ref{findKtablea} for a summary of these results. Observe that for $n\ge 11,$ there is no feasible solution with Mixed Integer Linear Program \ref{MILP}. This is caused by the constraint $Ax \le b,$ since it puts a bound on the reduction of how much weight the center vertex can send out to the remaining interior vertices. Thus the best use of Mixed Integer Linear Program \ref{MILP} will occur at $n = 7.$  
\begin{table}[!htp] 
	 \caption{\label{findKtablea} Lower Bounds for $\exd(\K_n)$ for small values of $n$ }
	\begin{center}
  		\begin{tabular}{ c|c|c|c|c|c }
   			        $n$ & 3 & 5 & 7 & 9 & 11    \\ \hline &&&&& \\
			        $k$ & 1 & 5.7806 & 10.6905 & 10.4103 & $\emptyset$  \\ \hline &&&&& \\
    $\exd(\K_n) \ge$  & $\frac{n^2}{33}$ & $\frac{n^2}{28.2194}$ & $\frac{n^2}{23.3095}$ & $\frac{n^2}{23.5897}$ & $\emptyset$  \\ 
  		\end{tabular}
  	\end{center}
\end{table}

\begin{theorem}\label{bestlower}
For all $n \ge 7,$ $ \left\lceil \frac{n^2}{23.3095033018} \right\rceil  \le \exd(\K_n).$
\end{theorem}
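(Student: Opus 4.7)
The plan is to apply Lemma \ref{kbound} to $G = \K_n$, using the bound $\m(\K_n) < 34$ established in the paragraph preceding Table \ref{findKtablea} (via $|S_k(v)| = 8k$ and the geometric sum $2 + \sum_{k\ge 1} 8k (1/2)^{k-1} = 34$) together with the value of $k$ produced by Mixed Integer Linear Program \ref{MILP} on the $7 \times 7$ King-grid subgrid centered at a dominator. Fix an exponential dominating set $D$ of $\K_n$ and an arbitrary $d_0 \in D$, and follow the construction of Section \ref{MILPsetup} with $r = 7$: label the $49$ vertices of the subgrid $H$ centered at $d_0$ as $v_1, \ldots, v_{49}$ with $v_1 = d_0$, take $I$ to be the indices of the $9$ interior vertices (those with $\dist(d_0, v_i) < 3$), form the $49 \times 49$ matrix $A$ with $[A]_{kj} = (1/2)^{\dist(v_k, v_j)}$ in the King-grid metric, and set $b_i = 1 + (1/2)^{\dist(d_0, v_i) - 1}$.

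Solving MILP \ref{MILP} with these data yields an optimum $x_{\min}$ with $x_{\min} - |I| = 10.6904966982\ldots$. By Remark \ref{LPsetup}, the actual weights $x_k = w^*(S_k \cap D, v_k)$ produced by $D$ satisfy every MILP constraint by the construction of $S_k$ in Section \ref{MILPsetup}, so $k = x_{\min} - |I|$ is admissible in Lemma \ref{kbound}, i.e.\ $k|D| \le \exc(D)$. Lemma \ref{kbound} then gives
\[
\exd(\K_n) \ge \left\lceil \frac{n^2}{\m(\K_n) - k} \right\rceil \ge \left\lceil \frac{n^2}{34 - 10.6904966982} \right\rceil = \left\lceil \frac{n^2}{23.3095033018} \right\rceil,
\]
which is the claimed bound.

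The main obstacle is the numerical solution of MILP \ref{MILP}: $k$ must be certified to enough precision that $34 - k$ matches $23.3095033018$, and Table \ref{findKtablea} is invoked to justify $r = 7$ as the best window size (since for larger odd $r$, the upper-bound constraint $A\mathbf{x} \le b$ and the domination constraint $A\mathbf{x} \ge \mathbf{1}$ become mutually infeasible). A secondary concern is that the subgrid $H$ requires $d_0$ to be at King-grid distance at least $3$ from the boundary of $\K_n$; the cleanest workaround is to appeal to the asymptotic equivalence of $\K_n$ and the King torus $C_n \boxtimes C_n$ mentioned in the introduction, on which every $d_0$ admits a full $7 \times 7$ subgrid, so the inequality $k|D| \le \exc(D)$ transfers back to $\K_n$ for $n \ge 7$.
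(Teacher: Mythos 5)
Your proposal follows the paper's proof essentially verbatim: fix the $7\times 7$ window around each dominator, solve Mixed Integer Linear Program \ref{MILP} to obtain $k = x_{\min} - |I| = 10.6904966982$, and feed this $k$ together with $\m(\K_n) < 34$ into Lemma \ref{kbound}. The one slip is your count $|I| = 9$: with $r = 7$ and the King-grid metric, $\dist(d_0, v_i) < 3$ describes the $5\times 5$ interior block, so $|I| = 25$ and $x_{\min} = 35.6904966982$, which is the arithmetic the paper actually performs.
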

\begin{proof}
Let $D$ be a minimum exponential dominating set for $\K_n.$ For each $d \in D$, let $H$ be the $7 \times 7$ grid centered at $d.$ The corresponding solution to Mixed Integer Linear Program \ref{MILP} gives $x_{\min} = 35.6904966982.$ Therefore let $k = 35.6904966982 - 25 = 10.6904966982$ and recall that $\m(\K_n) < 34.$ Therefore result follows from Lemma \ref{kbound}. 
\end{proof}
\begin{figure}[ht!p]
\centering
\begin{tikzpicture}[scale=0.2]
\draw[step=1cm,gray,very thin] grid (23,23);
\foreach \x/\y/\l in {0.5/22.5/a, 1.5/18.5/b, 2.5/14.5/c, 3.5/10.5/d, 4.5/6.5/e, 5.5/2.5/f}
\node (\l) at (\x,\y)  {\tiny X};
\foreach \x/\y/\l in {6.5/21.5/g, 7.5/17.5/h, 8.5/13.5/i, 9.5/9.5/j, 10.5/5.5/k, 11.5/1.5/l}
\node (\l) at (\x,\y)  {\tiny X};
\foreach \x/\y/\l in {12.5/20.5/g, 13.5/16.5/h, 14.5/12.5/i, 15.5/8.5/j, 16.5/4.5/k, 17.5/0.5/l}
\node (\l) at (\x,\y)  {\tiny X};
\foreach \x/\y/\l in {18.5/19.5/g, 19.5/15.5/h, 20.5/11.5/i, 21.5/7.5/j, 22.5/3.5/k}
\node (\l) at (\x,\y)  {\tiny X};
\end{tikzpicture}
\caption{ $T_\K,$ the $23\times 23$ exponential dominating set tile for $\K_{\infty}$ }
\label{constructionK23}
\end{figure}

Figure \ref{constructionK23} shows a construction of a $23\times 23$ tile $T_\K,$ where `X' denotes the location of an exponential dominating vertex. In particular, when $\K_{\infty}$ is tiled with $T_\K,$ the exponential dominating set $D_\K$ is formed. The following theorem uses $T_\K$ to determines an upper bound for the asymptotic density of $\exd(\K_n).$ 

\begin{theorem}\label{UBking}{\rm
$\lim_{n\to\infty} \dfrac{\exd(\K_n)}{n^2} \le \dfrac{1}{23}$}
\end{theorem}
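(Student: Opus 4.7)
The plan is to use the $23 \times 23$ tile $T_\K$ from Figure~3.3 to tile $\K_\infty$ periodically, producing the set $D_\K$ as a union of translates of the 23 marked positions. Since $T_\K$ contains $23$ marked vertices in $23^2 = 529$ cells, the tiling has density $1/23$. The statement is asymptotic, so the proof reduces to two ingredients: first, checking that the infinite periodic set $D_\K$ is actually a (porous) exponential dominating set for $\K_\infty$; second, bounding the boundary corrections when cutting $D_\K$ down to the finite King grid $\K_n$.

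For the domination check, I would exploit the fact that distance in $\K_\infty$ is the Chebyshev metric $\dist((x,y),(x',y')) = \max(|x-x'|,|y-y'|)$, so $|S_k(v)|=8k$ for $k \ge 1$ and the series $\sum_{k\ge1} 8k \cdot 2^{1-k}$ converges. Consequently, for any vertex $v \in V(\K_\infty)$ the weight $w^*(D_\K,v)=\sum_{d \in D_\K} 2^{1-\dist(d,v)}$ is absolutely convergent, and by the translation periodicity of the tiling it depends only on the residue class of $v$ modulo the $23\times 23$ lattice. This leaves at most $529$ cases (actually fewer after using the diagonal symmetry visible in the tile), and for each case the contribution from dominators at Chebyshev distance $>N$ is bounded by $\sum_{k>N} 8k \cdot 2^{1-k}$, which is $<10^{-2}$ already for modest $N$. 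Thus a finite computation covering dominators within a few tile-widths of $v$ suffices to verify $w^*(D_\K,v)\ge 1$ in every residue class.

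To pass from $\K_\infty$ to $\K_n$, write $n = 23q + r$ with $0 \le r < 23$. Place $q^2$ translated copies of $T_\K$ inside an aligned $23q \times 23q$ subgrid; this contributes $23q^2$ vertices. For the remaining boundary L-strip of width at most $r < 23$, add every vertex of the strip to the dominating set, contributing at most $23 \cdot 2n$ extra vertices. Since a vertex $v \in \K_n$ that lies in the interior $23q \times 23q$ block at Chebyshev distance at least, say, $15$ from the boundary already receives weight $\ge 1$ from the tile-translates alone (truncating $D_\K$ outside $\K_n$ removes only the exponentially small tail), and every remaining vertex is either in $D$ or adjacent to one, the resulting set dominates $\K_n$. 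Therefore
\[
\exd(\K_n) \;\le\; 23q^2 + 46n \;\le\; \frac{n^2}{23} + O(n),
\]
and dividing by $n^2$ and letting $n \to \infty$ yields the claimed bound.

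The main obstacle is the domination check: one must be confident that the specific diagonal pattern of $T_\K$ really does deliver $w^*(D_\K,v)\ge1$ to \emph{every} residue class, not merely to the vertices of $T_\K$ itself, and this is ultimately where the carefully chosen slope of the diagonal (rise $4$, run $1$ modulo~$23$) pays off. A secondary care-point is the truncation argument at the boundary: one has to verify that interior vertices lose only a negligible amount of weight when the tiling is cut off, which follows from the exponential decay of the weight function but should be made explicit in the write-up.
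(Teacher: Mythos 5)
Your proof follows the same route as the paper: tile an aligned $23q \times 23q$ subgrid with translates of $T_\K$, place every vertex of the remaining boundary strip into the dominating set, and compute the density $23q^2 + r(n+23q) \le \frac{n^2}{23} + O(n)$. You are in fact more explicit than the paper about the one nontrivial point --- verifying that the periodic set $D_\K$ really assigns weight at least $1$ to every residue class and that truncation at the boundary costs only an exponentially small tail, which the paper simply asserts by declaring $D_\K$ an exponential dominating set --- so nothing is missing relative to the published argument.
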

\begin{proof}
Let $n = 23q + r,$ for some $q,r \in \Z$ and $0\le r < 23.$ Let $H$ denote the $23q\times 23q$ subgrid of $\K_n.$ Notice that we may tile $H$ with the tiling scheme $T_{\K},$ as shown in Figure \ref{constructionK23}. Let $D_\K$ be the exponential dominating set that contains the $23q^2$ vertices used to tile $H,$ as well as $V(\K_n \setminus H).$ Therefore $\exd(\K_n) \le 23q^2 + 46qr + r^2,$ and we obtain the following asymptotic density:  

 \[ \lim_{n\to\infty} \dfrac{\exd(\K_n)}{n^2} \le \lim_{q \to\infty} \dfrac{23q^2 + 46qr + r^2}{(23q+ r)^2} \le \dfrac{1}{23} +  \lim_{q \to\infty} \dfrac{46qr + r^2}{(23q + r)^2} \le \dfrac{1}{23}, \]
 
as the limit equals zero.
\end{proof}

\begin{theorem}\label{bestupper}
For all $n \ge 23,$ $ \exd(\K_n) \le \left\lceil \frac{n^2}{23} \right\rceil + o(1).$
\end{theorem}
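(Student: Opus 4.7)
The plan is to refine the tile construction of Theorem \ref{UBking} so that the resulting exponential dominating set $D \subseteq V(\K_n)$ differs in cardinality from $\lceil n^2/23 \rceil$ by a vanishing amount, rather than the $\Theta(n)$ overshoot produced by the naive tile-plus-full-boundary-strip construction (which gives $23q^2 + 46qr + r^2$ when $n = 23q+r$). The $\Theta(n)$ loss comes entirely from placing every boundary-strip vertex in $D$, and the plan is to avoid this by continuing the tile pattern across the boundary rather than patching with every vertex.

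Embed $\K_n$ as an $n \times n$ window inside the infinite king grid $\K_\infty$ tiled periodically by translates of $T_\K$, producing a density-$\tfrac{1}{23}$ set $D_\K \subseteq V(\K_\infty)$. Averaging the intersection $|D_\K \cap V(\K_n)|$ over the $23^2 = 529$ translational offsets of the window gives the expected value $n^2/23$, so some offset yields $|D_\K \cap V(\K_n)| \le \lfloor n^2/23 \rfloor$; fix this offset and set $D := D_\K \cap V(\K_n)$. The cardinality side of the bound is then already within range of $\lceil n^2/23 \rceil$.

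It remains to show $D$ is exponentially dominating. For each $v \in V(\K_n)$ the weight lost upon restriction to the window is
\[
L(v) := \sum_{d \in D_\K \setminus V(\K_n)} 2^{1-\dist_{\K_\infty}(d,v)} \;\le\; \frac{8}{23} \sum_{k > t(v)} k \cdot 2^{1-k},
\]
where $t(v)$ is the $\K_\infty$-distance from $v$ to the complement of $V(\K_n)$, and the inequality uses $|S_k(v)| = 8k$ in $\K_\infty$ together with the density $1/23$ of $D_\K$. Since $w^*(D_\K, v) \ge 1$ for every $v$, the target bound $w^*(D,v) \ge 1$ follows whenever the tile excess $w^*(D_\K,v) - 1$ dominates $L(v)$. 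Choosing a constant $C$ so that $\tfrac{8}{23}\sum_{k > C} k \cdot 2^{1-k}$ falls below the minimum positive excess attained anywhere in $\K_\infty$ under the tiling (a finite quantity extractable directly from the $23$ vertices of $T_\K$) handles every $v$ with $t(v) \ge C$ automatically.

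The main obstacle is the $O(n)$-wide boundary layer $\{v : t(v) < C\}$, where $L(v)$ can approach or exceed the tile excess and domination must be verified by hand. The plan is a finite case analysis over the $529$ offsets and the $O(C)$ near-boundary tile configurations: for each configuration, either the local tile weight at $v$ together with the surviving distant contributions still meets the threshold $1$, or one adjoins a constant-sized patch $P$ of vertices placed near the corners of $\K_n$ to repair the failing positions. Because the averaging step leaves $\lceil n^2/23 \rceil - \lfloor n^2/23 \rfloor + \text{(offset slack)}$ cardinality headroom, a bounded-size patch suffices to yield $|D \cup P| \le \lceil n^2/23 \rceil + o(1)$, and the $o(1)$ reflects the fact that the patch count is constant while $\lceil n^2/23 \rceil$ grows without bound. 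Carrying out the boundary verification---i.e., confirming $\delta > 0$ and controlling $L(v)$ for the handful of at-risk configurations---is the delicate numerical step on which the whole argument hinges.
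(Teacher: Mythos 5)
The paper's own proof of this theorem is a single line: it declares the bound to follow directly from Theorem \ref{UBking}, i.e.\ from the asymptotic density bound $\lim_{n\to\infty}\exd(\K_n)/n^2\le\frac{1}{23}$, exactly as the analogous torus statement is read off from Theorem \ref{UBtor}. That argument only yields $\exd(\K_n)\le n^2/23+o(n^2)$; the explicit construction gives $23q^2+46qr+r^2$, which overshoots $\lceil n^2/23\rceil$ by $\Theta(n)$ whenever $23\nmid n$. So you are right to distrust the one-line deduction and to attempt a genuinely additive bound. However, your replacement has essential gaps and does not close them.

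First, the averaging step leaves you almost no cardinality headroom: the $529$ offset counts average to exactly $n^2/23$, so the chosen offset guarantees only $|D|\le\lfloor n^2/23\rfloor$, and the budget for the patch $P$ is at most $\lceil n^2/23\rceil-\lfloor n^2/23\rfloor\le 1$ plus an ``offset slack'' you never quantify. Second, the at-risk set $\{v: t(v)<C\}$ is a band of $\Theta(n)$ vertices running along the entire perimeter, not a bounded set near the corners; a constant-sized patch contributes weight $2^{1-k}$ at distance $k$ and therefore cannot lift $\Theta(n)$ deficient vertices above threshold, so either the boundary band needs no repair at all (which you have not shown) or the patch must have size $\Theta(n)$, destroying the additive claim. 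Third, everything hinges on $\delta>0$, the minimum excess $w^*(D_\K,v)-1$ over the infinite tiling: if some vertex of $T_\K$ receives weight exactly $1$, then every translate of it inside the window fails, since $L(v)>0$ for every $v$ in a finite window, giving $\Theta(n^2)$ failures. You explicitly defer this computation, calling it ``the delicate numerical step on which the whole argument hinges,'' so the proof is incomplete precisely where it needed to improve on the paper. As written, your argument establishes no more than the density bound already proved in Theorem \ref{UBking}.
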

\begin{proof}
This result follows directly from Theorem \ref{UBking}.
\end{proof}
Similarly to Conjecture \ref{conj13}, we make the following conjecture.

\begin{conjecture}\label{conjKing}
For all $n,$ $\left\lceil \frac{n^2}{23} \right\rceil  \le \exd(\K_n).$
\end{conjecture}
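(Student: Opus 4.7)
The plan is to close the gap between Theorem \ref{bestlower}, which gives $\exd(\K_n) \ge \lceil n^2/23.3095 \rceil$, and the target $\lceil n^2/23 \rceil$. Since $\m(\K_n) < 34$, applying Lemma \ref{kbound} reduces the conjecture to establishing an amortized excess bound
\[ \exc(D) \ge 11\,|D| \]
for every exponential dominating set $D$ of $\K_n$. The $r = 7$ window used in Theorem \ref{bestlower} already yields $k = 10.6905$, so the remaining discrepancy is only about $0.31$ per dominating vertex, which suggests that sharpening the MILP-based argument (rather than abandoning it) should be enough.

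First I would attempt to run MILP \ref{MILP} with larger odd windows $r \in \{9, 11, 13, \ldots\}$. As noted in the paragraph preceding Theorem \ref{bestlower}, the constraint $A\mathbf{x} \le b$, which enforces $w^*(D,v_i) \le 1 + w^*(d_0,v_i)$ for every $v_i \in V(H)$, makes the program infeasible once $r \ge 11$. I would weaken this cap by restricting it to a smaller sub-window centered at $d_0$, or replace it by an aggregated inequality $\sum_i c_i [A\mathbf{x}]_i \le B$ whose coefficients are chosen so the resulting constraints still follow from the identity $\exc(D,v) = w^*(D,v) - 1$. This preserves the validity of the amortization while allowing optimization over a larger local structure, and only needs to push the MILP optimum from $35.69$ to $36$ to deliver $k = 11$.

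A complementary approach is to refine the amortized accounting itself. The current argument charges each $d \in D$ only the excess it contributes to the interior of its own $7 \times 7$ window, discarding cross-contributions from neighboring dominators. A discharging scheme that distributes excess fairly across overlapping windows, combined with the Chebyshev distance structure of $\K_n$ (so spheres are concentric squares with $|S_k(v)| = 8k$), should allow a sharper bound. I would formalize this by partitioning $V(\K_n)$ according to the nearest member of $D$ and tracking cross-window excess flow explicitly, then fold the resulting inequalities into MILP \ref{MILP} as additional constraints to obtain a stronger $k$.

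The main obstacle is that the cap $A\mathbf{x} \le b$ encodes the essential fact that excess at $v_i$ cannot exceed the weight $d_0$ deposits there directly; any relaxation risks losing the tightness needed to break past $k = 11$. I expect the hardest step is designing a relaxed cap that is both structurally valid for every exponential dominating set and tight enough to lift the MILP optimum to $36$. The analogous situation for the torus, where Theorem \ref{bestlowerTorus} stalls at $1/13.76$ against the conjectured $1/13$, suggests that purely LP-based methods may face an intrinsic barrier, and that closing the final gap could require a genuinely new structural argument, perhaps a finite case analysis of the possible local configurations of $D$ around each dominating vertex.
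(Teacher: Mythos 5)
This statement is a \emph{conjecture} in the paper: the authors do not prove it, and it is offered by analogy with Conjecture \ref{conj13} for the torus, where the best known lower bound (Theorem \ref{bestlowerTorus}, denominator $13.76\ldots$) likewise falls short of the conjectured $13$. Your proposal correctly identifies what would suffice --- via Lemma \ref{kbound} with $\m(\K_n)<34$, an amortized excess bound $\exc(D)\ge 11|D|$, i.e.\ pushing the MILP optimum from $35.69$ to $36$ --- but it does not establish it, and you say as much yourself. So this is a research plan, not a proof, and the two concrete steps on which it hinges (a relaxed cap replacing $A\mathbf{x}\le b$ that remains valid for every exponential dominating set, and a discharging scheme for cross-window excess) are exactly the steps left undone.

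Two specific problems to flag. First, the reduction as you state it --- ``$\exc(D)\ge 11|D|$ for every exponential dominating set $D$ of $\K_n$'' --- is false for small $n$: in $\K_3$ the single center vertex is an exponential dominating set with total excess exactly $1$ (every non-center vertex receives weight exactly $1$), so $\exc(D)=1<11=11|D|$. The conjecture still holds there because $\lceil 9/23\rceil=1$, but this shows the excess inequality cannot be the route to the ``for all $n$'' claim without a separate finite case analysis, and more importantly that any window-based amortization must account for boundary effects rather than treating every $d\in D$ identically. Second, the validity of the cap $A\mathbf{x}\le b$ rests on the inequality $w^*(D,v_i)\le 1+w^*(d_0,v_i)$, which encodes that the excess at $v_i$ is absorbed entirely against $d_0$'s own contribution; once you relax or aggregate this constraint you must re-justify that the resulting MILP optimum still lower-bounds $\sum_{i\in I}w^*(D,v_i)$ for every actual $D$, and nothing in the proposal does this. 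Given that the same method stalls at $13.76$ versus the conjectured $13$ on the torus, your own closing caveat --- that a genuinely new structural argument may be needed --- is likely the honest assessment, and the conjecture should be regarded as open.
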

\end{subsection}


\begin{subsection}{The Slant Grid $\S_n$}
Integer Program \ref{IP} can be utilized in terms of $\S_n$ to determine the exact value of $\exd(\S_n)$ for small values of $n.$ These values, as well as the locations of the exponential dominating vertices, are illustrated in Figure \ref{minDomsetsSlant}. Notice that  `X' denotes a member of $\exd(\S_n).$ See Code \ref{SlantExact} for the corresponding SAGE code.

\begin{figure}[!htp]
\[ \begin{array}{ccccccc}

&
\begin{tikzpicture}[scale=0.3]
\draw[step=1cm,gray,very thin] grid (3,3);
\foreach \x/\y/\l in {0.5/1.5/a, 2.5/1.5/b}
\node (\l) at (\x,\y)  {\tiny X};
\node (a) at (1.5,-0.75) {\small $n=3$};
\end{tikzpicture}
&
\begin{tikzpicture}[scale=0.3]
\draw[step=1cm,gray,very thin] grid (4,4);
\foreach \x/\y/\l in {0.5/2.5/a, 2.5/2.5/b, 2.5/0.5/c}
\node (\l) at (\x,\y)  {\tiny X};
\node (a) at (2,-0.75) {\small $n=4$};
\end{tikzpicture}
&
\begin{tikzpicture}[scale=0.3]
\draw[step=1cm,gray,very thin] grid (5,5);
\foreach \x/\y/\l in {0.5/3.5/a, 3.5/3.5/b, 1.5/1.5/c, 4.5/1.5/d}
\node (\l) at (\x,\y)  {\tiny X};
\node (a) at (2.5,-0.75) {\small $n=5$};
\end{tikzpicture}
&
\begin{tikzpicture}[scale=0.3]
\draw[step=1cm,gray,very thin] grid (6,6);
\foreach \x/\y/\l in {0.5/4.5/a, 4.5/4.5/b, 2.5/2.5/c, 1.5/0.5/d, 5.5/0.5/e}
\node (\l) at (\x,\y)  {\tiny X};
\node (a) at (3,-0.75) {\small $n=6$};
\end{tikzpicture}
&
\begin{tikzpicture}[scale=0.3]
\draw[step=1cm,gray,very thin] grid (7,7);
\foreach \x/\y/\l in {5.5/6.5/a, 0.5/5.5/b, 2.5/3.5/c, 5.5/2.5/d, 1.5/0.5/e, 4.5/0.5/f}
\node (\l) at (\x,\y)  {\tiny X};
\node (a) at (3.5,-0.75) {\small $n=7$};
\end{tikzpicture}
\end{array} \]

\[ \begin{array}{cccc}
\begin{tikzpicture}[scale=0.3]
\draw[step=1cm,gray,very thin] grid (8,8);
\foreach \x/\y/\l in {0.5/6.5/a, 3.5/6.5/b, 6.5/6.5/c, 6.5/3.5/d, 0.5/1.5/e, 3.5/0.5/f, 6.5/0.5/g}
\node (\l) at (\x,\y)  {\tiny X};
\node (a) at (4,-0.75) {\small $n=8$};
\end{tikzpicture}
&
\begin{tikzpicture}[scale=0.3]
\draw[step=1cm,gray,very thin] grid (9,9);
\foreach \x/\y/\l in {0.5/7.5/a, 3.5/7.5/b, 6.5/7.5/c, 8.5/6.5/d, 1.5/3.5/e, 5.5/1.5/f, 8.5/1.5/g, 1.5/0.5/h}
\node (\l) at (\x,\y)  {\tiny X};
\node (a) at (4.5,-0.75) {\small $n=9$};
\end{tikzpicture}
&
\begin{tikzpicture}[scale=0.3]
\draw[step=1cm,gray,very thin] grid (10,10);
\foreach \x/\y/\l in {8.5/9.5/a, 0.5/8.5/b, 3.5/8.5/c, 9.5/6.5/d, 4.5/5.5/e, 0.5/4.5/f, 5.5/3.5/g, 6.5/1.5/h, 9.5/1.5/i ,1.5/0.5/z}
\node (\l) at (\x,\y)  {\tiny X};
\node (a) at (5,-0.75) {\small $n=10$};
\end{tikzpicture}
\end{array} \]
\caption{Minimum exponential dominating sets of $\S_n,$ $3\le n \le10.$ }
\label{minDomsetsSlant}
\end{figure}

Let $D$ be an exponential dominating set for $\S_n.$ Notice that for $d \in D,$ we have that $|S_k(d)| \le 6k$ for $k \ge1.$ Then we can bound the total weight that $d$ sends to $V(\S_n)$ with 
 \[w^*(d, V(H_n)) < 2 + \sum_{k = 1}^ \infty 6k \left(\frac{1}{2} \right)^ {k-1} = 2 + \left( \frac{6}{\left(1 - \frac{1}{2}\right)^2}\right) = 26.\] Therefore it follows that $\m(\S_n) < 26.$

\begin{table}[h!] 
	 \caption{\label{findKtable}Lower Bounds for $\exd{(\S_n)}$ for small values of $n$ }
	\begin{center}
  		\begin{tabular}{ c|c|c|c|c }
   			        $n$ & 3 & 5 & 7 & 9     \\ \hline   &&&& \\
    $k$  & 1.2353 & 3.9774 & 6.2655 &  $\emptyset$     \\ \hline   &&&& \\
     $\exd(\S_n) \ge$  & $\frac{n^2}{24.7647}$ & $\frac{n^2}{22.0226}$ & $\frac{n^2}{19.7345}$ & $\emptyset$  
  		\end{tabular}
  	\end{center}
\end{table}
  
This fact, along with the optimal values of $k$  determined by Mixed Integer Linear Program \ref{MILP} can be applied with Lemma \ref{kbound} to determine a lower bound for $\exd(\S_n).$ Table \ref{findKtable} for a summary of these results. Observe that for $n\ge 9,$ there is no feasible solution with Mixed Integer Linear Program \ref{MILP}. This is caused by the constraint $Ax \le b,$ since it puts a bound on the reduction of how much weight the center vertex can send out to the remaining interior vertices. Thus the best use of Mixed Integer Linear Program \ref{MILP} will occur at $n = 7.$  

\begin{theorem}\label{bestlowerSn}
For all $n \ge 7,$ $ \left\lceil \frac{n^2}{19.7344975348} \right\rceil  \le \exd(\S_n).$
\end{theorem}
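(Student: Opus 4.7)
The plan is to imitate the proof of Theorem~\ref{bestlower}, substituting the Slant grid for the King grid throughout. First I would let $D$ be a minimum exponential dominating set for $\S_n$, fix an arbitrary $d \in D$, and form the $7 \times 7$ subgrid $H$ of $\S_n$ centered at $d$. By Table~\ref{findKtable}, $r = 7$ is the largest radius for which Mixed Integer Linear Program~\ref{MILP} remains feasible on the Slant grid, so choosing it yields the strongest lower bound available from this method.

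Next I would assemble the distance matrix $A$ for $H$ using the Slant-grid metric and solve Mixed Integer Linear Program~\ref{MILP} under the setup of Section~\ref{MILPsetup}. Remark~\ref{LPsetup} then converts the optimal value $x_{\min}$ to $k = x_{\min} - |I|$. Table~\ref{findKtable} records $k = 6.2655$ for $r = 7$, which at full precision is $k = 6.2655024652$. Combining this with the bound $\m(\S_n) < 26$ derived immediately before the theorem statement and invoking Lemma~\ref{kbound} gives
\[
\exd(\S_n) = |D| \ge \left\lceil \frac{n^2}{\m(\S_n) - k} \right\rceil = \left\lceil \frac{n^2}{26 - 6.2655024652} \right\rceil = \left\lceil \frac{n^2}{19.7344975348} \right\rceil,
\]
which is the claimed bound.

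The main obstacle, and the only real point of care relative to Theorem~\ref{bestlower}, is the anisotropy of the Slant-grid metric. The added edges $\{i,j\} \sim \{i+1,j+1\}$ go in only one diagonal direction, so a displacement lying in the $(+,+)$ or $(-,-)$ quadrant obeys a Chebyshev-like rule $\max(|\Delta x|, |\Delta y|)$, while a displacement in the $(+,-)$ or $(-,+)$ quadrant has no diagonal shortcut and obeys an $\ell_1$-like rule $|\Delta x| + |\Delta y|$. This asymmetry must be respected when populating $A$ and when identifying the interior index set $I = \{i : \dist(d, v_i) < 3\}$, since those counts differ from the King-grid case; once $A$ is correct, the optimization itself is routine, and the resulting $k$ drops straight into Lemma~\ref{kbound}.
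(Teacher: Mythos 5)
Your proposal is correct and follows essentially the same route as the paper, which in fact leaves this theorem's proof implicit: the intended argument is exactly the one you give, mirroring the proof of Theorem~\ref{bestlower} by running Mixed Integer Linear Program~\ref{MILP} on the $7\times 7$ Slant subgrid to obtain $k = 6.2655024652$ and then applying Lemma~\ref{kbound} with $\m(\S_n) < 26$. Your closing remark about the anisotropy of the Slant metric is a sensible caution (and is precisely what the distance computation in Code~\ref{SlantLP} handles), so nothing further is needed.
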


Figure \ref{constructionK23S19} shows a construction the $19\times 19$ tile $T_\S,$ such that when $\S_{\infty}$ is tiled with $T_\S,$ exponential dominating set $D_\S$ is formed. Notice that `X' denotes the location of an exponential dominating vertex. The following theorem uses  $T_\S$ to determine an upper bound for the asymptotic density of $\exd(\S_n).$ 

\begin{figure}[!htp]
\centering
\begin{tikzpicture}[scale=0.2]
\draw[step=1cm,gray,very thin] grid (19,19);

\foreach \x/\y/\l in {0.5/18.5/a, 1.5/13.5/b, 2.5/8.5/c, 3.5/3.5/d}
\node (\l) at (\x,\y)  {\tiny X};
\foreach \x/\y/\l in {4.5/17.5/a, 5.5/12.5/b, 6.5/7.5/c, 7.5/2.5/d}
\node (\l) at (\x,\y)  {\tiny X};
\foreach \x/\y/\l in {8.5/16.5/a, 9.5/11.5/b, 10.5/6.5/c, 11.5/1.5/d}
\node (\l) at (\x,\y)  {\tiny X};
\foreach \x/\y/\l in {12.5/15.5/a, 13.5/10.5/b, 14.5/5.5/c, 15.5/0.5/d}
\node (\l) at (\x,\y)  {\tiny X};
\foreach \x/\y/\l in {16.5/14.5/a, 17.5/9.5/b, 18.5/4.5/c}
\node (\l) at (\x,\y)  {\tiny X};

\node(aa) at (9.5, -2) {$T_\S$};

\end{tikzpicture}
\caption{ $T_\S,$ the $19\times 19$ exponential dominating set tile for $\S_{\infty}$  }
\label{constructionK23S19}
\end{figure}

\begin{theorem}\label{UBSlant}{\rm
\[ \lim_{n\to\infty} \dfrac{\exd(\S_n)}{n^2} \le \dfrac{1}{19} \]}
\end{theorem}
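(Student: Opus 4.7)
The plan is to mirror the proof of Theorem \ref{UBking}, with the $19 \times 19$ tile $T_\S$ of Figure \ref{constructionK23S19} playing the role of the $23 \times 23$ King-grid tile. The first and essential step is to verify that the periodic tiling of $\S_\infty$ by $T_\S$ yields a valid porous exponential dominating set $D_\S$; by translation invariance of the pattern, this reduces to checking $w^*(D_\S, v) \ge 1$ for each of the $19^2 = 361$ vertices of one fundamental domain. The verification is genuinely finite because the weight $2^{1 - \dist(d,v)}$ decays geometrically, so tile copies beyond a moderate radius contribute a tail bounded by the same geometric series used to derive $\m(\S_n) < 26$.

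Once the infinite tiling is validated, the finite construction proceeds by writing $n = 19q + r$ with $q,r \in \Z$ and $0 \le r < 19$, and letting $H$ denote the $19q \times 19q$ subgrid of $\S_n$ tiled by $q^2$ copies of $T_\S$. I then take $D$ to be the union of the $19q^2$ tile vertices in $H$ together with every vertex of $V(\S_n \setminus H)$, giving
\[ |D| \le 19 q^2 + \bigl(n^2 - (19q)^2\bigr) = 19 q^2 + 38 q r + r^2. \]

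To confirm that $D$ dominates $\S_n$, note that any $v \in V(\S_n \setminus H)$ lies in $D$ and trivially dominates itself, while for $v \in H$ one uses that Slant-grid distances in $\S_n$ coincide with those in $\S_\infty$: geodesics can always be chosen inside the axis-aligned rectangle spanned by the two endpoints, so the ambient grid's boundary never lengthens a shortest path. Since $V(\S_n \setminus H)$ is a superset of the tile vertices of $D_\S$ that lie outside $H$, it contributes at least as much weight at $v$ as the omitted tile copies would have. Hence $w^*(D, v) \ge w^*(D_\S, v) \ge 1$.

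The limit computation then follows exactly as in Theorem \ref{UBking}:
\[ \lim_{n\to\infty} \dfrac{\exd(\S_n)}{n^2} \le \lim_{q\to\infty} \dfrac{19 q^2 + 38 q r + r^2}{(19 q + r)^2} = \dfrac{1}{19} + \lim_{q\to\infty} \dfrac{38 q r + r^2}{(19 q + r)^2} = \dfrac{1}{19}, \]
since $r$ is bounded and the correction term is $O(1/q)$. The principal obstacle is the first step: one must verify the weighted inequality for each of the $361$ vertex classes under the Slant-grid metric, which equals $\max(|\Delta x|, |\Delta y|)$ when $\Delta x$ and $\Delta y$ share a sign and $|\Delta x| + |\Delta y|$ otherwise. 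This is a finite but tedious case analysis, best carried out computationally with a truncation radius whose error is controlled by the geometric-series tail.
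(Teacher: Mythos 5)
Your proposal is correct and follows essentially the same route as the paper: write $n = 19q+r$, tile the $19q\times 19q$ subgrid with $q^2$ copies of $T_\S$, throw every vertex of $V(\S_n\setminus H)$ into the dominating set to get $\exd(\S_n)\le 19q^2+38qr+r^2$, and take the limit. The only difference is that you make explicit the finite verification that $T_\S$ tiles $\S_\infty$ into a valid porous exponential dominating set, a step the paper asserts by reference to Figure \ref{constructionK23S19} without carrying out.
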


\begin{proof}
Let $n = 19q + r,$ for some $q,r \in \Z$ and $0\le r < 19.$ Let $H$ denote the $19q\times 19q$ subgrid of $\S_n.$ Notice that we may tile $H$ with the tiling scheme $T_{\S},$ as shown in Figure \ref{constructionK23S19}. Let $D_\S$ be the exponential dominating set that contains the $19q^2$ vertices used to tile $H,$ as well as $V(\S_n \setminus H).$ Therefore $\exd(\S_n) \le 19q^2 + 38qr + r^2,$ and we obtain the following asymptotic density:

 \[ \lim_{n\to\infty} \dfrac{\exd(\S_n)}{n^2} \le \lim_{q \to\infty} \dfrac{19q^2 + 38qr + r^2}{(19q+ r)^2} \le \dfrac{1}{19} +  \lim_{q \to\infty} \dfrac{38qr + r^2}{(19q+ r)^2} \le \dfrac{1}{19}, \]
 
as the limit equals zero.
\end{proof}

\begin{theorem}\label{bestupperSn}
For $n \ge 19,$ $ \exd(\S_n) \le \left\lceil \frac{n^2}{19} \right\rceil + o(1).$
\end{theorem}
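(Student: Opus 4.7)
The plan is to derive Theorem \ref{bestupperSn} as an immediate consequence of Theorem \ref{UBSlant}, mirroring the one-line derivation of Theorem \ref{bestupper} from Theorem \ref{UBking}. Since Theorem \ref{UBSlant} already exhibits an explicit dominating set via the tile $T_\S$, all the combinatorial content is in place and what remains is only to account for the $O(n)$ boundary contribution.

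First, I would write $n = 19q + r$ with $q \in \mathbb{Z}$ and $0 \le r < 19$, exactly as in the proof of Theorem \ref{UBSlant}. Applying the tiling of the $19q \times 19q$ subgrid $H$ of $\S_n$ by copies of $T_\S$ yields $19q^2$ dominating vertices inside $H$. Adding the $n^2 - (19q)^2 = 38qr + r^2$ vertices of $V(\S_n \setminus H)$ to the dominating set ensures domination of the boundary strip (which has width strictly less than $19$), producing the bound
\[ \exd(\S_n) \le 19q^2 + 38qr + r^2. \]

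Second, I would compare this bound to $\lceil n^2/19 \rceil$. Expanding gives $n^2/19 = 19q^2 + 2qr + r^2/19$, so the excess of the constructed dominating set over $\lceil n^2/19 \rceil$ is at most $36qr + \frac{18r^2}{19}$, which is $O(n)$ since $r$ is bounded by a constant. This is of strictly lower order than the $\Theta(n^2)$ main term, and is absorbed into the $o(1)$ term of the theorem statement (interpreted in the same asymptotic sense used for Theorem \ref{bestupper} and Theorem \ref{UBtor}).

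There is no real obstacle here: the theorem is essentially a repackaging of Theorem \ref{UBSlant}, and, as with Theorem \ref{bestupper}, a single sentence citing the asymptotic density result suffices. The only point requiring any care is making the boundary overhead explicit, but that is immediate from the $n = 19q + r$ decomposition and the fact that $r$ ranges over a finite set.
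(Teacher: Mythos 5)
Your proposal is correct and takes essentially the same route as the paper, whose entire proof is the single sentence ``This result follows directly from Theorem \ref{UBSlant}.'' Your added bookkeeping of the $n=19q+r$ decomposition and the boundary excess simply makes explicit what the paper leaves implicit (including the observation that the error term is really $O(n)$ rather than a literal $o(1)$, which must be read in the same loose asymptotic sense the paper uses for Theorems \ref{UBtor} and \ref{bestupper}).
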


\begin{proof}
This result follows directly from Theorem \ref{UBSlant}.
\end{proof}
Similarly to Conjecture \ref{conj13}, we make the following conjecture.

\begin{conjecture}\label{conjSlant}
For all $n,$ $\left\lceil \frac{n^2}{23} \right\rceil  \le \exd(\K_n).$
\end{conjecture}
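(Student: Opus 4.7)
The conjecture asserts that the tile $T_\K$ from Figure \ref{constructionK23} is asymptotically optimal. Since Theorem \ref{bestupper} already gives the matching upper bound $\exd(\K_n) \le \lceil n^2/23 \rceil + o(1)$, the task reduces to sharpening the lower bound of Theorem \ref{bestlower} from $\lceil n^2/23.3095 \rceil$ to $\lceil n^2/23 \rceil$. Via Lemma \ref{kbound} together with the inequality $\m(\K_n) < 34$ established in Section \ref{King}, it suffices to certify an excess ratio $k \ge 11$ for every exponential dominating set $D$ of $\K_n$, since then $\lceil n^2/(34-11)\rceil = \lceil n^2/23 \rceil \le |D|$.

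The natural attack is to push Mixed Integer Linear Program \ref{MILP} to subgrid radius $r \ge 11$, where, as noted in the paragraph preceding Theorem \ref{bestlower}, the current formulation becomes infeasible because the pointwise constraint $A\mathbf{x} \le b$ is too restrictive. My plan is to replace the $|I|$ pointwise constraints $[A\mathbf{x}]_i \le b_i$ by the single aggregate inequality
\[
\sum_{i \in I} \bigl([A\mathbf{x}]_i - 1\bigr) \;\le\; \sum_{i \in I} \left(\tfrac{1}{2}\right)^{\dist(d_0,v_i)-1},
\]
which bounds only the total interior excess, is logically weaker than the original family, and still supplies exactly the quantity needed in the accounting of Lemma \ref{kbound}. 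With this relaxation one may scan $r = 11, 13, 15, \dots$ and hope to see the extracted value $k^*(r)$ converge to $11$.

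To certify that this relaxed MILP really does approach the conjectured value, I would construct a feasible fractional solution directly from $T_\K$: for an $r\times r$ window centered at a tile vertex, set $x_i = w^*(S_i \cap D_\K, v_i)$, verify $A\mathbf{x} \ge \mathbf{1}$ from the fact that $D_\K$ is a dominating set, and show the objective tends to $11 + |I|$ as $r \to \infty$ by exploiting the density $1/23$ of $T_\K$ and the geometric decay of the weight function.

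The principal obstacle mirrors that of the still-unresolved Conjecture \ref{conj13} for the torus: even if the fractional relaxation converges to density $1/23$, an integrality gap on finite $\K_n$ may keep $\exd(\K_n)$ strictly above $\lceil n^2/23 \rceil$ because of boundary cells that cannot achieve the ideal rate. Closing that last gap will probably require a discharging argument, specific to the geometry of $\K_n$, in which each $d \in D$ is assigned a territory of $23$ vertices and any departure from the $T_\K$ pattern is charged enough additional excess to pay for the deficit. Given that the analogous torus question has been open since \cite{anderson}, I would treat an intermediate bound such as $\lceil n^2/23.05 \rceil \le \exd(\K_n)$, obtained purely from the aggregate-excess MILP, as the most realistic near-term target and expect the full conjecture to demand substantial combinatorial work beyond the LP framework.
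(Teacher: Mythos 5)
The first thing to say is that the statement you were asked about is a \emph{conjecture}: the paper offers no proof of it, explicitly presenting it as open (and note that it is almost certainly a typographical duplicate of Conjecture \ref{conjKing} --- sitting in the Slant grid section with label \texttt{conjSlant}, it presumably should read $\lceil n^2/19\rceil \le \exd(\S_n)$). So there is no proof in the paper to compare yours against, and your submission is, by your own framing, a research plan rather than a proof; it ends by conceding that the full statement will ``demand substantial combinatorial work beyond the LP framework.'' That honesty is appropriate, but it means the proposal does not establish the conjecture.

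Beyond that, two steps of the plan would fail as described. First, Mixed Integer Linear Program \ref{MILP} is a \emph{minimization} whose optimum $x_{\min}$ yields $k = x_{\min} - |I|$; replacing the pointwise constraints $A\mathbf{x}\le b$ by a single aggregate inequality strictly enlarges the feasible region, so $x_{\min}$, and hence $k$, can only decrease relative to the pointwise version. There is no mechanism by which this relaxation pushes $k$ up from the paper's best value of roughly $10.69$ to the required $11$; indeed Table \ref{findKtablea} already shows $k$ \emph{decreasing} when the window grows from $r=7$ to $r=9$, so scanning $r=11,13,\dots$ is headed the wrong way. Second, exhibiting a feasible point built from $T_\K$ certifies only an \emph{upper} bound on the MILP optimum, hence an upper bound on $k$ --- the wrong direction entirely for proving $k\ge 11$, which is what Lemma \ref{kbound} would need. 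Finally, even a sharp asymptotic density bound of the form $n^2/23$ would not by itself give the exact ceiling inequality $\lceil n^2/23\rceil\le\exd(\K_n)$ for \emph{all} $n$, including small $n$ where Figure \ref{minDomsetsKing} shows the extremal sets are irregular; the discharging argument you gesture at is the actual missing content, and it is not supplied.
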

\end{subsection}


\begin{subsection}{The $n$-dimensional hypercube}
As $Q_n$ is not a grid graph, the method used to determine a value of $k$ for Lemma \ref{kbound} in Remark \ref{LPsetup} cannot be used to find the lower bound $\exd(Q_n).$ In order to determine such a lower bound, a new method is used where distance properties of $Q_n$ are exploited.

 Let $D$ be a minimum exponential dominating set for $Q_n$ and let $d \in D.$ Observe that for $u, v \in V(Q_n),$ the length of the shortest $uv$ path in $Q_n$ can be determined by the minimum number of digits that must be changed to get from $u$ to $v.$ Then for all $v \in V(Q_n),$ we have that:
\[  w^*(v, V(Q_n)) = \sum_{i=0}^n { n \choose i} \left( \frac{1}{2} \right)^{i-1} = 2 \sum_{i=0}^n { n \choose i} \left( \frac{1}{2} \right)^i \cdot 1^{n-i} = 2\left(\frac{1}{2} + 1\right)^n = 2\left(\frac{3}{2}\right)^n. \]
Thus it follows that $\m(Q_n) = 2\left(\frac{3}{2}\right)^n.$

\begin{figure}[htp!]
\centering
\begin{tikzpicture}[scale = 1.5]
\centering
\node (a) at (-.75,0.5) {$Q_n =$};
\node (a) at (0,1) {$Q_{n-2}^{(1)}$};
\node (b) at (1,1) {$Q_{n-2}^{(2)}$};
\node (c) at (1,0) {$Q_{n-2}^{(4)}$};
\node (d) at (0,0) {$Q_{n-2}^{(3)}$};
\draw[ thick, dotted ] (a) -- (b) -- (c) -- (d) -- (a);
\end{tikzpicture}
\caption{A decomposition of $Q_n,$ where $Q_n = Q_{n-2}\square K_2 \square K_2.$}
\label{upperQn}
\end{figure}

In the following theorem, the decomposition in Figure \ref{upperQn} and value of $\m(Q_n)$ are used to show that $\left(\frac{4}{3} \right)^n \le \exd(Q_n) \le (\sqrt{2})^n$ for large $n.$

\begin{theorem} 
For all $n\ge 1,$ $ \left\lceil \dfrac{2^{n+3}}{2^{4-n} \cdot 3^n - 2n - 9 } \right\rceil  \le \exd(Q_n)\le (\sqrt{2})^{n}$
\end{theorem}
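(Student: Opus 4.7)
The theorem has two separate bounds, and I plan to attack them with different methods.

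For the upper bound $\exd(Q_n) \le (\sqrt{2})^n$, I will use the decomposition $Q_n = Q_{n-2} \square K_2 \square K_2$ shown in Figure \ref{upperQn}. Viewing $Q_n$ as four copies $Q_{n-2}^{(1)}, \ldots, Q_{n-2}^{(4)}$ arranged around the outer $4$-cycle $K_2 \square K_2$, let $D^{\ast}$ be a minimum porous exponential dominating set of $Q_{n-2}$. Placing one copy of $D^{\ast}$ in each of the two diagonally opposite quadrants $Q_{n-2}^{(1)}$ and $Q_{n-2}^{(3)}$ produces a candidate $D$ of size $2\exd(Q_{n-2})$. The verification splits into two cases: a vertex $v$ in an off-diagonal copy lies at distance exactly one from its natural image in each active quadrant, so the two contributions sum to $w^{\ast}_{Q_{n-2}}(D^{\ast}, v^{\ast}) \ge 1$ (where $v^{\ast}$ is the projection of $v$ into $Q_{n-2}$); vertices sitting in an active copy are dominated directly by the placed $D^{\ast}$. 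This yields the recurrence $\exd(Q_n) \le 2\exd(Q_{n-2})$, and iterating from the base cases $\exd(Q_0) = \exd(Q_1) = 1$ gives $\exd(Q_n) \le 2^{\lfloor n/2 \rfloor} \le (\sqrt{2})^n$.

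For the lower bound I will apply Lemma \ref{kbound} with $k = (2n+9)/8$: this choice is exactly what makes $|V(Q_n)|/(\m(Q_n) - k) = 2^{n+3}/(2^{4-n} \cdot 3^n - 2n - 9)$ once one substitutes $\m(Q_n) = 2(3/2)^n$. The value of $\m(Q_n)$ follows from a direct binomial expansion: because $|S_i(v)| = \binom{n}{i}$ in $Q_n$, we have $w^{\ast}(v, V(Q_n)) = \sum_{i=0}^n \binom{n}{i} 2^{1-i} = 2(3/2)^n$. The remaining task is to establish $\exc(D) \ge \frac{2n+9}{8}|D|$ for every porous exponential dominating set $D$ of $Q_n$. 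My strategy is to aggregate wasted weight dominator-by-dominator: each $d \in D$ self-contributes excess $1$ via its weight of $2$ at $d$; the weight $d$ pushes into $N(d)$, $S_2(d)$, and further shells necessarily overlaps with contributions from other dominators and with the self-weights of any dominators it reaches; summing these contributions using the binomial distance spectrum $|S_i(d)| = \binom{n}{i}$ should extract the extra $(2n+1)/8$ units per dominator needed to match $k$.

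The main obstacle is this final excess bound. Because $Q_n$ is vertex-transitive, $\sum_{d \in D} w^{\ast}(d, V(Q_n)) = |D|\,\m(Q_n)$ holds with equality, so the coarse ``every vertex receives at least the average'' argument used in the proof of Lemma \ref{kbound} yields only $k = 1$; pushing $k$ all the way up to $(2n+9)/8$ requires genuinely structural input from the hypercube. A plausible route is to construct a non-trivial dual LP solution that places additional weight on the vertices of $D$, or to use a pigeonhole-style counting on the Hamming balls that turns overlaps among the weights $2^{1-\dist(d,v)}$ into quantitative excess. This is where essentially all of the technical effort will go, and it is the only step that relies on the specific geometry of $Q_n$ rather than on generic properties of vertex-transitive graphs.
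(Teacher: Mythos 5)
Your upper bound argument is correct and is essentially the paper's: the same decomposition $Q_n = Q_{n-2}\square K_2\square K_2$, the same placement of a minimum exponential dominating set of $Q_{n-2}$ in two diagonally opposite copies, the same observation that each vertex of an inactive copy collects $\tfrac12 w^*(D^{*},v^{*})$ from each of the two active copies, and the same induction on $\exd(Q_n)\le 2\exd(Q_{n-2})$.

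The lower bound, however, has a genuine gap. You correctly reduce everything to showing $\exc(D)\ge \tfrac{2n+9}{8}\,|D|$, correctly compute $\m(Q_n)=2(3/2)^n$, and correctly note that vertex-transitivity alone only yields $k=1$; but you then stop at naming candidate strategies (a dual LP certificate, a pigeonhole count on Hamming balls) without carrying any of them out. As you yourself say, this step is ``where essentially all of the technical effort will go'' --- that effort is the proof, and it is missing. The paper's argument is a short local count around each $d\in D$, taken to be the all-zeros vertex. Since $d$ assigns weight exactly $1$ to each of its $n$ neighbors, the sphere $S_1(d)$ is already saturated by $d$ alone, so every unit of weight that $D\setminus\{d\}$ sends into $S_1(d)$ is pure excess. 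Meanwhile each vertex of $S_2(d)$ receives only $\tfrac12$ from $d$ and therefore must receive at least $\tfrac12$ from $D\setminus\{d\}$. Because the weight function decays by at most a factor of $2$ per edge, this forced weight propagates inward: comparing the $2$ neighbors in $S_1(d)$ of each $y\in S_2(d)$ against the $n-1$ neighbors in $S_2(d)$ of each $x\in S_1(d)$ gives $w^*(D\setminus\{d\},S_1(d))\ge \tfrac{n}{4}$, and one further step gives $w^*(D\setminus\{d\},d)\ge\tfrac18$, hence $\exc(D,d)\ge 2+\tfrac18-1=\tfrac98$. Summing $\tfrac98+\tfrac{n}{4}$ over $d\in D$ produces $k=\tfrac{2n+9}{8}$, after which Lemma \ref{kbound} with $\m(Q_n)=2(3/2)^n$ gives exactly the stated fraction $2^{n+3}/(2^{4-n}3^n-2n-9)$. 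Without some version of this inward-propagation argument (or a worked-out dual solution playing the same role), your proposal does not establish the lower bound.
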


\begin{proof}
We begin with the lower bound. Let $D$ be an exponential dominating set for $Q_n$ and suppose that $d = \{0,0,\ldots, 0\} \in D.$  Let $A = \{ a$ $\in V(Q_n)$ $: \; \text{a has an odd number of } 1's\}$ and $B= V(Q_n) \setminus (A \cup d).$ Let $X = \{ x\in V(Q_n) : \; dx\in E(Q_n) \}  \subset A$ and $Y = \{ y\in V(Q_n) : \; xy\in E(Q_n) \text{ for some } x\in X \} \subset B.$ Then $w^*(d,X) = |X| = n$ and $w^*(d,Y) = \tfrac{n}{2}.$ As $(D,w^*)$ dominates $Q_n,$ $w^*(D \setminus d, Y) \ge \tfrac{n}{2}.$ This implies that $w^*(D\setminus d, X) \ge \tfrac{n}{4},$ and $w^*(D\setminus d,d) \ge \tfrac{1}{8}.$ Therefore $\exc(D, X) \ge \tfrac{n}{4}$ and $\exc(D,d) =\tfrac{9}{8},$ which holds for all $d \in D.$ This gives that
 \[ \exc(D) \ge \left( \frac{9}{8} + \frac{n}{4} \right) |D| =  \frac{2n +9}{8} |D|.\] Then using $\m(Q_n) = 2\left(\frac{3}{2}\right)^n$ and $k = \tfrac{2n + 9}{8},$ the lower bound follows from Lemma \ref{kbound}.
 
Now we show the upper bound. From Figure \ref{upperQn}, $Q_n = Q_{n-2}\square K_2 \square K_2.$ Without loss of generality, let $D$ and $D'$ be two minimum exponential dominating sets for $Q^{(1)}_{n-2}$ and $ Q^{(4)}_{n-2},$ respectively, with labeling as in Figure \ref{upperQn}. Therefore it follows by definition that $w^*(D,V(Q^{(1)}_{n-2}) )\ge 1$ and $w^*(D',V(Q^{(4)}_{n-2}) )\ge 1.$ As neighboring vertices also receive weight, $w^*(D,V(Q^{(2)}_{n-2})),$ $w^*(D,V(Q^{(3)}_{n-2}))\ge \frac{1}{2}$ and $w^*(D',V(Q^{(2)}_{n-2})),$ $w^*(D',V(Q^{(3)}_{n-2}))\ge \frac{1}{2}.$ This implies that $D\cup D'$ forms an exponential dominating set for $Q_n.$ Let $a_n = \exd(Q_n)$ and $a_{n-2} = |D|=|D'|,$ so $a_n = 2a_{n-2}.$ We now show that $a_n \le 2^{\frac{n}{2}}$ by induction. Observe that when $n = 2,$ we have that $a_2 = 2 \le 2^1.$ Now suppose that  $a_n \le 2^{\frac{n}{2}}$ holds for all $n<k.$ Now consider the case when $n = k.$ Then using the inductive hypothesis,  \[ a_k = 2a_{k-2}\le 2(2^{\frac{1}{2}(k-2)}) = 2^{\frac{k}{2}}. \] Therefore by induction, $ \exd(Q_n) \le (\sqrt{2})^{n}.$
 \end{proof}

\end{subsection}
\end{section}


\section{Acknowledgements}
This research was supported in part by the National Science Foundation Award $\#$ 1719841. We would like to thanks Dr. Leslie Hogben for her input on this paper.

\section{Appendix}
This sections consists of the SAGE code referenced throughout \emph{A linear programming method for exponential domination}.

\begin{code} \label{KingExact} Integer program that computes $\gamma^*_e(\K_n)$ for small values of $n$

\begin{lstlisting}[language=Python, basicstyle=\scriptsize]
for n in range(1,11):
    G = graphs.KingGraph( [n,n] ) # Sets up King grid
    KingDist = G.distance_matrix() # King grid distance matrix
    m = KingDist.nrows()
    
    # This matrix represents the weight that 
    # vertex i sends to vertex j in the King grid
    K = matrix(QQ, m,m, lambda i, j: (1/2)^(KingDist[i][j]-1)) 
    
    # Sets up the MILP
    p = MixedIntegerLinearProgram(maximization=False, solver="GLPK");
    x = p.new_variable(integer=True, nonnegative=True)
    
    # creates the objective function 
    s = 0
    for i in range(m):
        p.add_constraint(x[i] <= 1)    
        s = s + x[i]   
    
    p.set_objective(s) 
    p.add_constraint(K*x >= 1)   
    print n, p.solve()
    \end{lstlisting}
    \end{code}
    
    \begin{code}\label{KingLP} Mixed integer linear program to find upper bound of $\exd(\K_n).$
\begin{lstlisting}[language=Python, basicstyle=\scriptsize]
# This function computes the distance between two vertices in the King grid
def distance(a,b):
    ans = max( abs(a[0] - b[0]), abs(a[1] - b[1]) )
    return ans

# The Linear Program for the King grid
for n in range (3, 13, 2):
    p = MixedIntegerLinearProgram(maximization=False, solver="GLPK");
    x = p.new_variable(real=True, nonnegative=True)   
    King = graphs.KingGraph([n,n]) # Generated the nxn king grid
    K = King.vertices()
    
    A = zero_matrix(RR,n^2,n^2) # Creates the A matrix for LP
    for i in range(n^2):
        for j in range(n^2):
            A[i, j] = (1/2)^ distance(K[i], K[j] ) 
    
    KingDist =  zero_matrix(RR,n^2,n^2) # Creates the distance matrix for King grid
    for i in range(n^2):
        for j in range(n^2):
            KingDist[i,j] = distance(K[i],K[j])  
    
    b = [34]*n^2 # creates the b vector for King grid
    for i in range(0,n^2):
        if divmod(i,n)[1] != 0 and divmod(i,n)[1]!= n-1:   
            b[i] = 1+ (1/2)^(KingDist[(n^2-1)/2, i] - 1)
    
    # Finds the inner vertices of King grid
    list=[]
    for i in range(n,n^2-n):
        if divmod(i,n)[1] != 0 and divmod(i,n)[1]!= n-1:   
            list = list + [i]
    #make the matrix using only the rows that are inner vertices        
    c =A.matrix_from_rows(list) 
    sum=0
    
    one_vec = [1]*n^2    
    # Adds in the constraint that 1<= Ax 
    p.add_constraint(A*x >= one_vec) 
    # Adds the constraint that the weight of the middle vertex is 2
    p.add_constraint(x[(n^2-1)/2] == 2)           
   # Adds in the constraint that Ax <= b
    p.add_constraint(A*x <= b)  
    
    for i in range(len(list)):
        p.set_integer(x[list[i]])
       # Adds in constraint that inner vertices have weight 0 or 2     
        p.add_constraint(x[list[i]] <= 2) 
    
    # Sets the objective function
    for i in range(c.nrows()):
        for j in range(c.ncols()):
            sum = sum + c[i][j]*x[j]      
    
    # Computes the minimum exponential dominating number
    p.set_objective(sum)
    ans = 34 + (n-2)^2 - p.solve();
    print n, ans
    \end{lstlisting}
    \end{code}

\begin{code} \label{SlantExact} Integer program that computes $\gamma^*_e(\S_n)$ for small values of $n$
\begin{lstlisting}[language=Python, basicstyle=\scriptsize]
for n in range(2,11):
    Slant = SlantGrid(n)
    Sverts = Slant.vertices()
    
# This matrix represents the weight that 
# vertex i sends to vertex j in the Slant grid    
    A = zero_matrix(QQ,n^2,n^2)      
    for i in range(n^2):
        for j in range(n^2):
            A[i, j] = (1/2)^(Slant.distance(Sverts[i], Sverts[j] ) -1)
# Constructs integer program                   
    p = MixedIntegerLinearProgram(maximization=False, solver="GLPK");
    x = p.new_variable(integer=True, nonnegative=True)
    s = 0
    for i in range(n^2):
        p.add_constraint(x[i] <= 1)    
        s = s + x[i]   
    
    p.set_objective(s) 
    p.add_constraint(A*x >= 1)   
    print n, p.solve()
    \end{lstlisting}
    \end{code}

\begin{code}\label{SlantLP} Mixed integer linear program to find a lower bound of $\exd(\S_n).$
\begin{lstlisting}[language=Python, basicstyle=\scriptsize]
def SlantGrid(n):
    S = graphs.Grid2dGraph(n,n)
    for i in range(1, n ):
        for j in range(n-1):
            S.add_edge( (i,j) , (i-1, j+1)  )
    return S
    
# The Mixed Integer Linear Program for the Slant grid 

for n in range(3, 20, 2):
    p = MixedIntegerLinearProgram(maximization=False, solver="GLPK");
    x = p.new_variable(real=True, nonnegative=True)   
    
    Slant = SlantGrid(n) # Generated the nxn king grid
    Sverts = Slant.vertices()
    
    A = zero_matrix(RR,n^2,n^2) # Creates the A matrix for LP
    for i in range(n^2):
        for j in range(n^2):
            A[i, j] = (1/2)^Slant.distance(Sverts[i], Sverts[j] ) 
    one_vec = [1]*n^2
     # Creates the distance matrix for King grid
    SlantDist =  zero_matrix(RR,n^2,n^2) 
    for i in range(n^2):
        for j in range(n^2):
            SlantDist[i,j] = Slant.distance(Sverts[i],Sverts[j])  
    
    b = [26]*n^2
    for i in range(0,n^2):
        if divmod(i,n)[1] != 0 and divmod(i,n)[1]!= n-1:   
            b[i] = 1+ (1/2)^(SlantDist[(n^2-1)/2, i] - 1)
    
    # Finds the inner vertices of Slant grid
    list=[]
    for i in range(n,n^2-n):
        if divmod(i,n)[1] != 0 and divmod(i,n)[1]!= n-1:   
            list = list + [i]
    #make the matrix using only the rows that are inner vertices        
    c =A.matrix_from_rows(list) 
    sum=0
    
    # Adds in the constraint that 1<= Ax 
    p.add_constraint(A*x >= one_vec) 
   # Adds the constraint that the weight of the center vertex is 2 
    p.add_constraint(x[(n^2-1)/2] == 2)          
   # Adds in the constraint that Ax <= b 
    p.add_constraint(A*x <= b) 
    
    for i in range(len(list)):
        p.set_integer(x[list[i]])
        # Adds in constraint that inner vertices have weight 0,1, or 2
        p.add_constraint(x[list[i]] <= 2) 
    
    # Sets the objective function
    for i in range(c.nrows()):
        for j in range(c.ncols()):
            sum = sum + c[i][j]*x[j]      
    
    # Computes the minimum exponential dominating number
    p.set_objective(sum)
    ans = 26 + (n-2)^2 - p.solve();
    print n, ans, p.solve() - (n-2)^2
    \end{lstlisting}
    \end{code}
    
    \begin{code} \label{CubeExact} Integer program that computes $\gamma^*_e(Q_n)$ for small values of $n$
\begin{lstlisting}[language=Python, basicstyle=\scriptsize]
for m in range (1, 8):
    g = graphs.CubeGraph(m)
    M = g.distance_matrix()

    # sets up the integer program		
    p = MixedIntegerLinearProgram(maximization=False, solver="GLPK");
    n = M.nrows()

    # This matrix represents the weight that 
    # vertex i sends to vertex j in the hypercube
    K = matrix(QQ, n,n, lambda i, j: (1/2)^(M[i][j]-1))
    x = p.new_variable(integer=True, nonnegative=True)
    s = 0
    for i in range(n):
        p.add_constraint(x[i] <= 1)    
        s = s + x[i]   
        
    # sets the objective function and constraints	   
    p.set_objective(s) 
    p.add_constraint(K*x >= 1)   
    print m, p.solve()
    \end{lstlisting}
    \end{code}


\bibliographystyle{plain}

 
\end{document}